\newtheorem{thm}{Theorem}
\newtheorem{prop}{Proposition}
\begin{document}
\crdata{}

\title{Elementary Integration of Superelliptic Integrals}
\newfont{\authfntsmall}{phvr at 11pt}
\newfont{\eaddfntsmall}{phvr at 9pt}

\numberofauthors{3} 
\author{
\alignauthor
Thierry Combot\\
       \affaddr{Univ. de Bourgogne (France)}\\
       \email{$\!\!\!\!\!\!\!\!\!\!\!\!$thierry.combot@u-bourgogne.fr$\!\!\!\!\!\!\!\!\!\!\!\!$}
}
\date{today}

\maketitle

\begin{abstract} 
Consider a superelliptic integral $I=\int P/(Q S^{1/k}) dx$ with $\mathbb{K}=\mathbb{Q}(\xi)$, $\xi$ a primitive $k$th root of unity, $P,Q,S\in\mathbb{K}[x]$ and $S$ has simple roots and degree coprime with $k$. Note $d$ the maximum of the degree of $P,Q,S$, $h$ the logarithmic height of the coefficients and $g$ the genus of $y^k-S(x)$. We present an algorithm which solves the elementary integration problem of $I$ generically in $O((kd)^{\omega+2g+1} h^{g+1})$ operations.
\end{abstract}

\bigskip
\vspace{1mm}
\noindent
{\bf Categories and Subject Descriptors: 68W30} \\

\vspace{1mm}
\noindent {\bf Keywords: Symbolic Integration, Divisors, Superelliptic curves} \\

A superelliptic integral is an integral of the form
$$I(x)=\int \frac{P(x)}{Q(x) S(x)^{1/k}} dx$$
with $P,Q,S\in\mathbb{K}[x]$, and we can assume the multiplicity of roots of $S$ to be $<k$. We add \emph{the technical condition} that the roots of $S$ are simple and its degree is coprime with $k$. If the degree is multiple of $k$, then infinity is a regular point, and a root of $S$ can be sent to infinity by a Moebius transformation, which however often requires a field extension of the coefficients. The case $k=1$ is well known \cite{1}, is always elementary integrable, and thus we can assume $k\geq 2$.  When $k=2$, the roots of $S$ are always simple, and the integral $I$ is hyperelliptic. For general $k$, the integral $I$ is called superelliptic, and is, although of specific form, wildly encountered, see for example \cite{2,8} where all ``random'' examples where in fact superelliptic. The purpose of this article is to present an efficient algorithm to decide if $I$ is an elementary integral.\\

\begin{defi}
A superelliptic integral is called
\begin{itemize}
\item elementary if $I$ can be written
$$I(x)=G_0(x)+\sum\limits_{i} \lambda_i \ln G_i(x), \; G_i\in \overline{\mathbb{Q}}(x,S^{1/k})$$\vspace{-0.6cm}
\item reduced if $Q$ is square free coprime with $S$ and $\deg P <\deg Q+k^{-1} \deg S-1$
\item of first kind if reduced and $Q$ is constant.
\item of torsion if it is a sum of an elementary integral and a first kind integral.
\end{itemize}
\end{defi}

The integration process starts with a Hermite reduction, which consists in finding an algebraic function $G_0$ such that $\partial_x G_0-PQ^{-1}S^{-1/k}$ has only simple poles and no pole at infinity. If such function can be found, the resulting integral is reduced. This part is well known \cite{2,3}, and in our case is even simpler as the notion of integral basis can be avoided. This part and its complexity will be recalled at the beginning of section $2$.\\

Much more difficult is to find the logarithmic part. An integral of the first kind is never elementary except $0$. If we are able to find a sum $L$ of logarithms of algebraic functions such that their residues coincides with the ones of the integral, the difference $I-L$ is of first kind and thus $I$ is of torsion, and then $I$ is elementary if and only if $I-L=0$. This is where expedients as heuristic approaches are used to speed up the computation \cite{8}. The base approach is given by Trager \cite{1}, and improved by Bertrand in the hyperelliptic case. Three major difficulties happen in these approaches.\\

\textbf{Problem 1}. For a reduced superelliptic integral, the re\-si\-dues $\lambda$ are roots of the polynomial
$$R(\lambda)=\hbox{resultant}_x(P^k-\lambda^k Q'^k S,Q),$$
which factorizes in $\mathbb{K}[\lambda]$ under the form
$$R(\lambda)=\prod_{i=1}^l R_i(\lambda^{k_i}),\quad k_i \mid k,$$
We say that $R$ is \emph{generic} if for all $i$ the Galois group of $R_i(\lambda^{k_i})$ is maximal $\mathbb{Z}_{k_i}^{\deg R_i} \ltimes S_{\deg R_i}$. Following Trager, we need to compute a $\mathbb{Q}$-basis of these residues. However, generically $l=1$ and $R$ is generic, and thus the splitting field of $R$ is of degree $k^dd!$. Worse, this computation has to be done at the beginning, even if $I$ was not elementary in the end. The generic case is thus typically intractable \cite{5}, which is probably the main reason this algorithm is still not implemented in Maple and Mathematica. Theorem \ref{thm1} presents a similar decomposition much cheaper, and sufficient to conclude in the generic case.

\begin{thm}\label{thm1}
Given a field extension $\mathbb{K}[\alpha]$ and a reduced superelliptic integral $I$, the algorithm \underline{\sf TraceIntegrals} computes a set of integrals $\mathcal{S}_\alpha$ such that
\begin{itemize}
\item The residues of the integrals are in $\mathbb{Z}[\xi]$.
\item If the integral $I$ is elementary, then all integrals are of torsion.
\end{itemize}
If $R$ is generic, $I$ is a linear combination of the $(\mathcal{S}_{\alpha})_{R(\alpha)=0}$ and an integral of the first kind.
\end{thm}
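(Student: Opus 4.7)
The plan is to split $I$ into pieces indexed by Galois orbits of residues and apply trace operators to land their residues in $\mathbb{Z}[\xi]$, relying on the fact that Galois conjugation commutes with elementary integration to transfer elementarity from $I$ to each trace piece.

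Concretely, for a fixed root $\alpha\in\mathbb{K}[\alpha]$ of $R$, I would form the localizer $Q_\alpha=\gcd(Q,\, P^k-\alpha^k Q'^k S)$ computed over $\mathbb{K}[\alpha]$: by the resultant formula defining $R$, the roots of $Q_\alpha$ are exactly the poles of $I$ at which the residue equals $\alpha$ (up to a $k$-th root of unity), and a partial-fraction step on $P/Q$ splits off a subintegrand $P_\alpha/(Q_\alpha S^{1/k})$. The set $\mathcal{S}_\alpha$ is then built by applying trace operators $\mathrm{Tr}_{\mathbb{K}[\alpha]/\mathbb{K}}(\beta\,\cdot\,)$, for $\beta$ running over a $\mathbb{K}$-basis of $\mathbb{K}[\alpha]$ (dual to the power basis $1,\alpha,\ldots$), to the integral $\int P_\alpha/(Q_\alpha S^{1/k})\,dx$; the resulting residues are traces of $\beta\alpha^j$, hence algebraic integers, lying in $\mathbb{Z}[\xi]$, which establishes the first bullet.

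For the elementarity descent, observe that if $I=G_0+\sum\lambda_i\ln G_i$ with $G_i\in\overline{\mathbb{Q}}(x,S^{1/k})$ is elementary, then every Galois conjugate $\sigma(I)$ has the same shape with $\sigma(G_i)$ in place of $G_i$ and is therefore again elementary. Each element of $\mathcal{S}_\alpha$ is by construction a $\mathbb{K}$-linear combination of conjugates of the $\alpha$-piece of $I$, so it is elementary; being also a reduced integral with $\mathbb{Z}[\xi]$-residues, it is of torsion by definition. In the generic case $l=1$ with maximal Galois group, a single orbit of $\alpha$ covers every residue of $I$ once the cyclic $\mathbb{Z}_k$-action through the $k$-th root branch is accounted for, so the $\mathbb{K}$-span of $\bigcup_{R(\alpha)=0}\mathcal{S}_\alpha$ contains the full logarithmic part of $I$; subtracting the corresponding linear combination leaves a remainder with no simple-pole residues, i.e., of the first kind.

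The delicate point is the construction step: ensuring that $P_\alpha/(Q_\alpha S^{1/k})$ is genuinely a superelliptic integrand, and that the residue at each root of $Q_\alpha$ equals $\alpha$ rather than $\xi^j\alpha$ for some unintended $j$. This requires a coherent normalization $P_\alpha\equiv \alpha\, Q_\alpha'\, S^{1/k}\pmod{Q_\alpha}$ compatible with a single global branch of $S^{1/k}$, which then propagates through the trace. The non-generic case $l>1$ with $k_i\mid k$ only changes the bookkeeping: each block $R_i(\lambda^{k_i})$ is handled through the subextension $\mathbb{K}[\alpha^{k_i}]$ with the same recipe, and the torsion conclusion is preserved because the trace/restriction argument is insensitive to which subextension is used.
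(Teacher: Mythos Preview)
Your construction of $\mathcal{S}_\alpha$ via the localizer $Q_\alpha=\gcd(Q,P^k-\alpha^kQ'^kS)$ and dual-basis traces is close in spirit to the paper's \underline{\sf TraceIntegrals}, but your torsion argument has a real gap. You assert that ``each element of $\mathcal{S}_\alpha$ is a $\mathbb{K}$-linear combination of conjugates of the $\alpha$-piece of $I$, so it is elementary.'' This does not follow: the $\alpha$-piece $\int P_\alpha/(Q_\alpha S^{1/k})\,dx$ is not a Galois conjugate of $I$; only the \emph{sum} of the $\sigma(\alpha)$-pieces over $\sigma\in\mathrm{Gal}(\mathbb{K}(\alpha)/\mathbb{K})$ recovers $I$, and elementarity of a sum says nothing about an arbitrary weighted combination of the summands. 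The paper's route is different and essential. From $I=\sum_i\lambda_i\ln G_i$ one has $\mathrm{res}_z I=\sum_i\lambda_i\,\mathrm{ord}_z G_i$ with $\mathrm{ord}_z G_i\in\mathbb{Z}$; because the trace operator $\tau_j$ is $\mathbb{Q}$-linear and the orders are rational integers, $\tau_j(\mathrm{res}_z I)=\sum_i\tau_j(\lambda_i)\,\mathrm{ord}_z G_i$. After projecting onto a $\mathbb{Q}(\alpha)$-basis of $\mathbb{Q}(\alpha)(\xi)$ and clearing denominators one obtains integer exponents $n_i$ and a function $F=\prod_iG_i^{n_i}$ whose log-derivative matches the residues of the trace integral. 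The difference is then residue-free, hence of first kind, so the trace integral is of torsion---not, a priori, elementary.

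Your generic-completeness step is also too soft. That ``a single orbit of $\alpha$ covers every residue'' is a statement about the \emph{support} of the residue vector; what is needed is that the residue vectors of the trace integrals \emph{span} (over $\mathbb{K}$) the residue vector of $I$. The paper uses the hypothesis $\mathrm{Gal}=\mathbb{Z}_{k'}^{l+1}\ltimes S_{l+1}$ explicitly: the trace over $\mathbb{K}(\alpha_i)$ fixes $\alpha_i$ and sends every other root $\alpha_j$ to the common value $t=(u-\alpha_i)/l$, where $u$ is minus the second leading coefficient of $R_i$. The residue vectors of the trace integrals (as $\alpha_i$ ranges over the roots) are thus the rows of an explicit matrix $M$ with $\alpha_i$ on the diagonal and $(u-\alpha_i)/l$ off-diagonal, and one checks directly that $(\alpha_1,\dots,\alpha_{l+1})\in\mathrm{Im}\,M$ (invertible when $u\neq 0$, image the zero-sum hyperplane when $u=0$). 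This matrix computation is where genericity actually enters; without it the spanning claim is unproved.
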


The generic condition is sufficient but not necessary, and no examples were found for which $I$ could not be decomposed as such. Taking a larger $\mathbb{K}[\alpha]$ containing simultaneously all the roots of $R$ reduces to original Trager's approach and thus provably works in all cases, but the cost then rises to $O((k^dd!)^3)$. Remark however that even if the decomposition is not possible, the integrals $\mathcal{S}_\alpha$ being of torsion is still a necessary condition for elementary integration, and thus gives a quick test to prove an integral is non-elementary.\\

A particular behaviour of superelliptic integrals is the Galois action on the $k$-th root, which multiplies the integral by $\xi$. Because of this, the space of residues of a superelliptic integral is always invariant by $\xi$. Thus our decomposition is similar to Trager's one but done on $\mathbb{K}$ instead of $\mathbb{Q}$, as keeping this invariance by multiplication by $\xi$ is essential for the following.\\

\textbf{Problem 2}. For each integral of $\mathcal{S}_\alpha$, we need to decide if they are of torsion. Possibly more than one log is necessary, but they have to respect a precise pattern to ensure the invariance by $\xi$, leading us to introduce
$$L_S(P)= \sum\limits_{i=0}^{k-1} \xi^i\ln\left( \sum\limits_{j=0}^{k-1} P\left(x,\xi^j S^{1/k}\right) \right)$$
with $P\in\mathbb{C}[y]_{\leq k-1}[x]$. Now, a single function $L_S$ is necessary (see Propositions \ref{prop1},\ref{prop1b}). Trager's approach is to build a divisor on the superelliptic curve, and then to test its principality using linear algebra. Bertrand uses a nice representation of divisors on hyperelliptic curves to make the process more efficient. However, both are polynomial in the size of the output, and the degree of $P$ depends on the residues, and so can be extremely large as they are not even bounded by a function of $d$. It happens however that testing principality of a divisor can be done in logarithmic time of its height.

\begin{thm}\label{thm2}
Consider a reduced superelliptic integral $I$ with residues in $\mathbb{Z}[\xi]$. If $I= L_S(P)$ up to an integral of the first kind for a polynomial $P$, then it can be written
$$I= \sum\limits_{i=0}^l (-2)^i L_S(P_i)$$
up to a first kind integral where $\deg_x P_i \leq \deg Q +\frac{1}{k}\deg S$ and $l\leq \underset{r \hbox{ residues}}{\max} (\hbox{log}_2 \mid r \mid ) $. Algorithm \underline{\sf JacobianReduce} compute this decomposition in time $O((kd)^\omega l)$ if it exists.
\end{thm}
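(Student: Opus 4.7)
My plan is to translate $L_S(P)$ into divisor language on the superelliptic curve $C:y^k=S(x)$ and then imitate, inside the Jacobian, the binary expansion of a large integer.

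First I would associate to each $P\in\mathbb{C}[y]_{\leq k-1}[x]$ a divisor $D_P$ on $C$ recording the orders of vanishing of the Galois conjugates of $P$; the $\xi$-invariance baked into $L_S$ forces its coefficients to be $\mathbb{Z}[\xi]$-linear combinations. Two polynomials $P,P'$ yield integrands differing by a first-kind integral precisely when $D_P-D_{P'}$ lies in the lattice $\Lambda\subset\operatorname{Div}(C)_{\mathbb{Z}[\xi]}$ spanned by principal divisors and by divisors of regular differentials. The hypothesis ``$I=L_S(P)$ up to a first kind integral'' therefore fixes a single Jacobian class $[D]$, and I only need to exhibit a short presentation of it in terms of polynomials of controlled degree.

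The heart of \underline{\sf JacobianReduce} would, given a class $[D]$ with $\mathbb{Z}[\xi]$-coefficients, produce a reduced representative $D_{P_0}$ coming from a polynomial $P_0$ of $x$-degree at most $\deg Q+\tfrac{1}{k}\deg S$---the natural Riemann--Roch bound attached to the pole divisor of $PQ^{-1}S^{-1/k}$---together with a residual class $[E]$ satisfying $[D]\equiv [D_{P_0}]-2[E]\pmod{\Lambda}$ whose coefficients are at most half those of $[D]$. A basis of this Riemann--Roch space can be built by one linear-algebra computation of size $O((kd)^\omega)$ over $\mathbb{K}$, and the parity correction needed to turn a generic reduced representative into one whose difference with $[D]$ is actually $2$-divisible is a shift by a lattice vector read off from the same basis. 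Iterating $l\leq\max_r\log_2|r|$ times telescopes to
$$[D]=\sum_{i=0}^{l}(-2)^i [D_{P_i}]\pmod{\Lambda},$$
which is the announced decomposition; the total cost is $l$ such reductions, i.e.\ $O((kd)^\omega l)$.

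The hard part will be the parity-correction step: I must certify that the reduced representative $D_{P_0}$ can always be chosen so that $[D]-[D_{P_0}]$ is genuinely $(-2)$ times an integer divisor class of the same shape, not merely a class that happens to be $2$-divisible in the Jacobian. The assumption that the residues lie in $\mathbb{Z}[\xi]$ is decisive here---it confines us to a discrete lattice in which halving is a well-defined operation whose feasibility is detected by the Riemann--Roch system---and also explains why a failure of the halving step furnishes a proof that no such $P$ exists in the first place. Termination is then immediate: after $\lceil\max_r\log_2|r|\rceil$ halvings the coefficients drop below unity and the residual class must be trivial.
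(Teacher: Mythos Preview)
Your overall architecture is right—an iterated halving that produces a base-$(-2)$ expansion in the Jacobian—but you have inverted the two operations, and this is exactly what creates the ``parity-correction'' obstacle you worry about. The paper's algorithm avoids that obstacle entirely by reversing the order.

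Concretely: you propose to first reduce $[D]$ in the Jacobian to a bounded-degree representative $D_{P_0}$ and \emph{then} demand that $[D]-[D_{P_0}]$ be divisible by $2$ as an integral divisor. That divisibility is not automatic, and your sketch of how to enforce it (``a shift by a lattice vector read off from the same basis'') is not a proof. The paper instead first performs the elementary negabinary step on the integer coefficients of $D$ themselves—write $D=D_0-2D_1$ with $D_0$ taking values in $\{0,1\}$, which is pure integer arithmetic and always succeeds—\emph{then} recurses on $D_1$, and only afterwards applies a Riemann--Roch reduction (Proposition~\ref{prop2}) to the small non-negative divisor $\tilde D=D_0+2\,\overline{D_1}$, where $\overline{D_1}$ is the reduced output of the recursion. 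Because $D_0$ has $\{0,1\}$ values and $\overline{D_1}$ has height at most $g=(k-1)(\deg S-1)/2$, the divisor $\tilde D$ has height $O(k\deg Q+g)$, so the linear system at each step is of size $O(kd)$ and costs $O((kd)^\omega)$. No parity correction is ever needed: halving happens on integers, not on Jacobian classes.

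A smaller point: your lattice $\Lambda$ ``spanned by principal divisors and by divisors of regular differentials'' is not the right equivalence. Two $L_S$ expressions differ by a first-kind integral exactly when the corresponding divisors differ by a principal divisor; the first-kind integrals are detected only at the very end (step~6 of \underline{\sf ElementaryIntegrate}) by checking whether the residual reduced divisor is zero, not by quotienting out canonical divisors.
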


Remark that simplifying the sum to have just one $L_S$ function would have an exponential cost in $l$, thus this is thank to this specific representation of the solution that such a fast algorithm is possible. Testing principality of a divisor is equivalent to test if its reduction in the Jacobian of the superelliptic curve is $0$. The technical condition allows to have an efficient representation of divisors. If the support of the divisor is small but height is large, then a fast multiplication technique in the Jacobian is very efficient \cite{4}. This was overlooked by Bertrand \cite{3}. This allows a fantastic speed up as Trager and Bertrand algorithm were exponential in logarithmic height, and ours is linear. However, the coefficients size grows fast which decreases the usefulness of the algorithm, except when computed modulo a prime number. This gives a quick test for proving that the divisor is not principal, which is generically the case as, except in genus $0$, most superelliptic integrals are not elementary.\\
 
\textbf{Problem 3}. Theorem \ref{thm2} does not solve the elementary integration problem, as we could have $I=\frac{1}{N} L_S(P)$ with $N\in\mathbb{N},\; N\geq 2$ and this would not be found by Theorem \ref{thm2}. In this case, the corresponding divisors are not principal but of torsion, i.e. a multiple of the divisor is principal. This problem is solved by Trager using two ``good reduction primes'' $p$ \cite{1}, and then deducing a unique possible candidate for $N$. We know that all divisors are of torsion modulo $p$. Thus given a divisor and two good reductions $p,q$, we test the principality of multiples of the divisor modulo $p$ and $q$ until we find a multiple principal. A large prime is used to confirm with high probability this candidate.

\begin{thm}\label{thm3}
Consider a reduced superelliptic integral $I$ with residues in $\mathbb{Z}[\xi]$ with coefficients in $\mathbb{K}(\alpha)$ with $\alpha$ algebraic of degree $r$. Note $\Delta$ the discriminant of the square free part of $QS$. If $I=\frac{1}{N} L_S(P)$ for a polynomial $P$ and $N\in\mathbb{N}^*$, then algorithm \underline{\sf TorsionOrder} finds a non zero multiple of $N$ which is expected to be less than $\;\;\;$ $(1+\sqrt{r\phi(k)\ln(k\Delta)})^{2g}$
in time $\tilde{O}( (kd)^{\omega+g} h^{g+1} r^g)$. Else algorithm \underline{\sf TorsionOrder} returns $0$ with probability $1-\epsilon$.
\end{thm}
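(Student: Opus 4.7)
The plan is to combine good reduction of the superelliptic curve with injectivity of torsion under reduction. Given the hypothesis $I=\frac{1}{N}L_S(P)$ up to a first kind integral, the divisor $D$ built from the logarithmic residues of $I$ (as provided by Theorem \ref{thm2}) is exactly $N$-torsion in the Jacobian $J$ of $y^k=S(x)$ over $\mathbb{K}(\alpha)$. For a prime $\mathfrak{p}$ of $\mathbb{K}(\alpha)$ not dividing $k\Delta$, the curve has good reduction, and the standard theorem for abelian varieties gives an injection on torsion of order coprime to the residue characteristic. Consequently, for $\mathfrak{p}$ good with residue characteristic coprime to $N$, the order of $\bar D$ in $J(\mathbb{F}_\mathfrak{p})$ equals $N$, and in particular $N$ divides $\#J(\mathbb{F}_\mathfrak{p})$. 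The Hasse--Weil bound then gives $N\leq \#J(\mathbb{F}_\mathfrak{p})\leq (1+\sqrt{|\mathbb{F}_\mathfrak{p}|})^{2g}$, so returning $\#J(\mathbb{F}_\mathfrak{p})$ already yields a multiple of $N$ of the claimed shape.

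To match the announced bound, I would enumerate small rational primes $p$ coprime to $k\Delta$ and pick a prime $\mathfrak{p}$ above $p$ in $\mathbb{K}(\alpha)$. By Chebotarev density and the prime number theorem, a good $\mathfrak{p}$ of residue field size $O(r\phi(k)\ln(k\Delta))$ can be found (the absolute degree $r\phi(k)=[\mathbb{K}(\alpha):\mathbb{Q}]$ produces the worst-case residue degree), which is the prime effectively used. The algorithm \underline{\sf TorsionOrder} then: (i) reduces $D$ modulo $\mathfrak{p}$ using the compact representation from Theorem \ref{thm2}; (ii) computes $\#J(\mathbb{F}_\mathfrak{p})$ via the $L$-polynomial of $C$ mod $\mathfrak{p}$, obtained from the point counts $\#C(\mathbb{F}_{\mathfrak{p}^i})$ for $i\leq g$; (iii) optionally refines this to the exact order of $\bar D$ by testing divisors of $\#J(\mathbb{F}_\mathfrak{p})$ with the fast scalar multiplication of Theorem \ref{thm2}. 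A second independent prime $\mathfrak{q}$ is used as confirmation: the returned multiple must also annihilate $\bar D$ modulo $\mathfrak{q}$, else the LCM is taken. When $D$ is not torsion (the assumption of the theorem fails), this cross-check on a large confirmation prime fails with probability at most $\epsilon$, and the algorithm returns $0$.

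The main obstacle is fitting everything into the complexity $\tilde O((kd)^{\omega+g} h^{g+1} r^g)$. Each Jacobian operation costs $\tilde O((kd)^\omega)$ by Theorem \ref{thm2}; reducing $D$ from $\mathbb{K}(\alpha)$ to $\mathbb{F}_\mathfrak{p}$ absorbs the factor $h^{g+1}$ coming from the height of the representing ideals; the factor $r^g$ arises from arithmetic in a residue extension of degree up to $r\phi(k)$; and the Schoof-style point count contributes the extra $(kd)^g$ factor through the computation of $\#C(\mathbb{F}_{\mathfrak{p}^i})$ for $i\leq g$. The delicate step is this point count: it is the only part not directly covered by Theorem \ref{thm2}, and its control within the announced bound relies on $g$ being small relative to $d$ and on the coprime-degree technical condition ensuring a clean $L$-polynomial. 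The use of two primes rather than one is what converts a divisor of $N$ (the order mod one prime) into a multiple of $N$ (namely $\#J(\mathbb{F}_\mathfrak{p})$), and the confirmation prime furnishes the probabilistic guarantee.
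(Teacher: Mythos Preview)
Your high-level ingredients (good reduction of the Jacobian, injectivity on prime-to-$p$ torsion, the Hasse--Weil bound, and a second prime to handle the possibility $p\mid N$) are the same ones the paper uses. The execution, however, is genuinely different from the paper's, and your version has a gap in the complexity argument.

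\medskip
\textbf{What the paper actually does.} The algorithm \underline{\sf TorsionOrder} never computes $\#J(\mathbb{F}_\mathfrak{p})$ or an $L$-polynomial. It picks three very good reductions $(p_1,\mathcal{J}_1),(p_2,\mathcal{J}_2),(p_3,\mathcal{J}_3)$ with $p_3>1/\epsilon$ and then, for $i=1,2$, finds the exact order of $\bar D$ in $J(\mathbb{F}_{p_i})$ by the most naive method: call \underline{\sf JacobianReduce}$(nD)\bmod\mathcal{J}_i$ for $n=1,2,3,\ldots$ until it reduces to $0$. The Hasse--Weil bound guarantees termination after at most $(1+\sqrt{p_i})^{2g}=O(p_i^g)$ iterations, and since a very good prime can be found with $p_i=O(\phi(k)r\ln\Delta)=O(krdh)$, this is where the factor $(kd)^g h^g r^g$ in the stated complexity comes from; each iteration costs $\tilde O((kd)^\omega h)$ by Theorem~\ref{thm2}. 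Having obtained the two orders $n,m$, the paper does \emph{not} take an $\mathrm{lcm}$: it solves $np_1^u=mp_2^v$ for $u,v\in\mathbb{Z}$, exploiting that the prime-to-$p_i$ part of the order is preserved under reduction, so the true $N$ must simultaneously be $n$ times a power of $p_1$ and $m$ times a power of $p_2$. This recovers a single candidate $N$ (or shows none exists), which is then checked modulo the large third prime $p_3$ to get the failure probability $\leq (1+\sqrt{p_3})^{-2g}<\epsilon$.

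\medskip
\textbf{Where your route diverges and where it breaks.} Your step (ii) --- computing $\#J(\mathbb{F}_\mathfrak{p})$ from $\#C(\mathbb{F}_{\mathfrak{p}^i})$ for $i\le g$ --- is a legitimate alternative, but your complexity bookkeeping for it is wrong: naive point counting over $\mathbb{F}_{\mathfrak{p}^g}$ already costs $\Theta(p^g)$ field operations, which is the \emph{same} $O((krdh)^g)$ budget the paper spends on its linear search, and has nothing to do with ``$(kd)^g$'' or with Schoof. So the claimed source of the $(kd)^g$ factor is incorrect. Moreover, your last sentence (``two primes convert a divisor of $N$ into a multiple of $N$, namely $\#J(\mathbb{F}_\mathfrak{p})$'') conflates two different objects: $\#J(\mathbb{F}_\mathfrak{p})$ is already a multiple of the prime-to-$p$ part of $N$ from a single prime, and the second prime is needed only to recover the possibly missing $p$-part, exactly as in the paper's $np_1^u=mp_2^v$ step. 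Finally, you use the second prime both for this recombination and for the probabilistic check; the paper separates these roles, reserving a third prime $p_3>1/\epsilon$ for the check, which is what makes the $1-\epsilon$ bound clean.
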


The probabilistic part is important because having a false positive can be very costly in characteristic zero, as the binary complexity of \underline{\sf JacobianReduce} is not logarithmic in divisor height.

\section{Integral Decomposition}

\subsection{Hermite reduction}

The Hermite reduction for algebraic integral is described in \cite{2,3,8}. In our simpler superelliptic case, let us recall it to precise its complexity. We note $\hbox{sf}(Q)$ square free part of $Q$.\\

\noindent\underline{\sf HermiteReduction}\\
\textsf{Input:} A superelliptic integral $P/(QS^{1/k})$.\\
\textsf{Output:} A rational function $G\in\mathbb{K}(x)$ such that the integral
$$\int \frac{P}{Q S^{1/k}}-\partial_x \left( \frac{G}{S^{1/k}}\right) dx$$
is reduced or ``FAIL''.
\begin{enumerate}
\item Note $\tilde{Q}=Q/\hbox{sf}(Q),\; \hat{Q}=\hbox{sf}(Q)/(\hbox{sf}(Q) \wedge S)$ and $G=T/\tilde{Q}$ with $T$ an unknown polynomial with $\deg T\leq \deg P -\deg \hbox{sf}(Q) +1$
\item Solve the linear system coming from the condition
$$ \!\!\!\!\!\! \!\!\!\!\!\!\left(\frac{P}{QS^{1/k}}-\partial_x \left(\frac{T}{\tilde{Q} S^{1/k}}\right)\right) S^{1/k}\hat{Q}  \in \mathbb{K}_{< \deg \hat{Q}+\frac{1}{k} \deg S-1} [x]$$
\item If one solution return $T/\tilde{Q}$ else ``FAIL''
\end{enumerate}

Such reduction is not always possible due to the condition on the degree at infinity, as for example elliptic integral of the second kind are not reducible by this process.

\begin{prop}
If $\int  \frac{P}{Q S^{1/k}}$ is elementary, the algorithm $\;$ \underline{\sf HermiteReduction} is successful and runs in $O(d^\omega)$.
\end{prop}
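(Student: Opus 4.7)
The plan is to split the proposition into two subclaims: when $I$ is elementary the linear system in Step~2 is consistent, and setting up and solving this system costs $O(d^\omega)$ operations. For the first claim, I rewrite the ansatz as $G = T/(\tilde{Q} S^{1/k}) = T\,y^{k-1}/(\tilde{Q} S)$ with $y = S^{1/k}$, identifying it with an element of the $y^{k-1}$ isotypic component of $\mathbb{K}(x,y)$ under the Galois action $y\mapsto \xi y$ of the superelliptic curve $C:y^k=S(x)$. The integrand $P/(Qy)\,dx$ lies in this same component, so after averaging any candidate Hermite correction over the Galois group I may restrict to this component without loss of generality.

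I then invoke the general algebraic Hermite reduction \cite{2,3,8} to exhibit $G_0\in\mathbb{K}(x,y)$ such that $(P/(Qy))\,dx - dG_0$ has only simple poles and no pole at infinity; Liouville's theorem ensures the pole-at-infinity part is removable precisely when $I$ is elementary. After Galois averaging $G_0$ takes the form $U\,y^{k-1}$ with $U\in\mathbb{K}(x)$, and a local analysis of pole orders above each root of $Q$ (the finite multiple poles) and each root of $S$ (ramification points where $y$ vanishes simply) shows that the denominator of $U$ divides $\tilde{Q}\,S$, hence $U = T/(\tilde{Q} S)$ for some polynomial $T$. The degree bound on $T$ reduces to regularity at the unique totally ramified point at infinity---unique and totally ramified thanks to $\gcd(k,\deg S)=1$---which coincides with the polynomial-degree condition $\deg_x R < \deg \hat{Q} + k^{-1}\deg S - 1$ imposed in Step~2 after clearing denominators by $S^{1/k}\hat{Q}$.

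For the complexity, the linear system has $O(d)$ unknowns (the coefficients of $T$) and $O(d)$ equations (the vanishing coefficients of the high-degree monomials in the cleared polynomial). The matrix entries are $\mathbb{K}$-linear combinations of coefficients of $P, \tilde{Q}, \hat{Q}, S$ and their derivatives, which can be assembled in $O(d^2)$ operations; solving with fast linear algebra then costs $O(d^\omega)$ and dominates. The main obstacle I anticipate is the local pole-counting step that pins down the denominator of $U$ as exactly a divisor of $\tilde{Q}\,S$ and gives the precise degree bound on $T$: one must carefully track the zero and pole orders of $y^{k-1}$ at all ramified places---a simple zero at each root of $S$ and a pole of order $\deg S$ at infinity---to confirm that no elementary case is missed and no spurious constraint is introduced.
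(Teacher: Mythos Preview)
Your proposal is correct and follows essentially the same route as the paper: both arguments use Galois averaging to force the algebraic part into the single eigen\-component, arriving at the shape $T/(\tilde{Q}S^{1/k})$, then exploit the coprimality $\gcd(k,\deg S)=1$ (total ramification at infinity) to kill any residue---hence any pole---of the log part at infinity, and finally read off $O(d^\omega)$ from the size of the resulting linear system in the coefficients of $T$. Your write-up is more explicit about the isotypic decomposition and the local pole bookkeeping at the ramification points, whereas the paper simply asserts the decomposition $\frac{P}{QS^{1/k}}=\partial_x\bigl(\tfrac{T}{\tilde{Q}S^{1/k}}\bigr)+\tfrac{R}{\hat{Q}S^{1/k}}$ and then bounds $\deg R$ via the vanishing residue at infinity; the underlying mechanism is identical.
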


\begin{proof}
If $\int  \frac{P}{Q S^{1/k}}$ is elementary, then it can be written as a sum of an algebraic function and logs of algebraic functions. Thus $\frac{P}{Q S^{1/k}}$ can be written as the derivative of an algebraic function plus an algebraic function with simple poles. Now if an extension of $\mathbb{C}(x,S^{1/k})$ was necessary, then acting the Galois group on this extension would allow to find another decomposition in $\mathbb{C}(x,S^{1/k})$. Thus we can write
$$\frac{P}{Q S^{1/k}}=\partial_x \left( \frac{T}{\tilde{Q}S^{1/k}}\right)+ \frac{R}{\hat{Q} S^{1/k}}$$
with $T,R\in\mathbb{C}[x]$.
Now multiplying both sides by $\hat{Q}S^{1/k}$, we obtain that $(\frac{P}{QS^{1/k}}-\partial_x \frac{T}{\tilde{Q} S^{1/k}})S^{1/k} \hat{Q}$ should be a polynomial. Knowing that $\frac{R}{\hat{Q} S^{1/k}}$ is the derivative of logs and as $k \wedge \deg S =1$, there are no residues at infinity and so the exponent at infinity is $<-1$. Thus we have $\deg R < \deg \hat{Q}+\frac{1}{k} \deg S -1$ which is exactly the condition of step $2$. Thus a solution $T$ as step $2$ should be found, and step $3$ then returns $G=T/\tilde{Q}$.
The complexity comes from step $2$ where a system of size $\deg(PS\hat{Q})$ should be solved. This size is linear in $d$, and thus the complexity is $O(d^\omega)$
\end{proof}

\subsection{Trace integrals}

Following Trager, if $\int P/(QS^{1/k}) dx$ is reduced and elementary, then it can be written
\begin{equation}
\int P/(QS^{1/k}) dx= \sum\limits_{i=1}^l \lambda_i \ln G_i(x)
\end{equation}
where $G_i\in\overline{\mathbb{K}}[x,S^{1/k}]$ and the $\lambda_i$ form a $\mathbb{Q}$ basis of the residues of $P/(QS^{1/k})$.

\begin{defi}
Consider field extension $\mathbb{L} \supset \mathbb{K}(\alpha) \supset \mathbb{K}$. The trace of $\beta \in\mathbb{L}$ over $\mathbb{K}(\alpha)$ is 
$$\hbox{tr}_{\mathbb{K}(\alpha)}(\beta)=-\frac{\hbox{coeff}_{z^{\deg T-1}}(T)}{\deg T}$$
where $T\in \mathbb{K}(\alpha)[z]$ is minimal unitary polynomial of $\beta$.
\end{defi}

\begin{prop}
We can build functions $P_i/(Q_i S^{1/k})$ without pole at $\infty$ such that $Q_i \mid Q$ and $\forall \beta \in Q^{-1}(0)$ we have
$$\hbox{res}_\beta \frac{P_i}{Q_i S^{1/k}}= d_i \hbox{coeff}_{\alpha^i} \hbox{tr}_{\mathbb{K}(\alpha)} \left(\hbox{res}_\beta \frac{P}{Q S^{1/k}}\right)$$
with $d_i\in\mathbb{N}^*$ chosen minimal such that all residues are in $\mathbb{Z}[\xi]$. Algorithm \underline{\sf TraceIntegrals} computes these functions in $O(d^2e^2)$ with $e=[\mathbb{K}(\alpha):\mathbb{K}]$.
\end{prop}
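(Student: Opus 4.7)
The plan is to construct $P_i,Q_i$ by Lagrange interpolation against the residue data. For each simple root $\beta$ of $Q$ lifted to a point $(\beta,y_\beta)$ on the curve $y^k=S(x)$, I compute $r_\beta=P(\beta)/(Q'(\beta)\,y_\beta)\in\overline{\mathbb{K}}$, take its trace over $\mathbb{L}:=\mathbb{K}(\alpha)$, and extract the $\alpha^i$-coefficient $c_i(\beta)\in\mathbb{K}$. Defining $T(\beta):=c_i(\beta)\cdot y_\beta$, I claim $T$ agrees with a polynomial $T_i\in\mathbb{L}[x]$ of degree $<\deg Q$. Then letting $\tilde P_i\equiv d_i T_i Q' \pmod{Q}$, dividing out the common factor $D=\gcd(\tilde P_i,Q)$, and setting $P_i=\tilde P_i/D$, $Q_i=Q/D$, the identity $Q'(\beta)=Q_i'(\beta)\,D(\beta)$ for $\beta$ a root of $Q_i$ gives $\hbox{res}_\beta(P_i/(Q_iS^{1/k}))=P_i(\beta)/(Q_i'(\beta)\,y_\beta)=d_i T_i(\beta)/y_\beta=d_i c_i(\beta)$, as required. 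The integer $d_i\in\mathbb{N}^*$ is then chosen minimally so that $d_i c_i(\beta)\in\mathbb{Z}[\xi]$.

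The main obstacle is justifying the descent $T_i\in\mathbb{L}[x]$, since $c_i(\beta)\in\mathbb{K}$ and $y_\beta\in\overline{\mathbb{K}}$ but the product must land in a $\mathbb{K}(\beta)$-extension consistent with a polynomial over $\mathbb{L}$. Two compatibilities combine. First, replacing $y_\beta$ by $\xi y_\beta$ sends $r_\beta\mapsto\xi^{-1}r_\beta$; because $\xi\in\mathbb{L}$, linearity of the trace gives $\hbox{tr}_\mathbb{L}(r_\beta)\mapsto\xi^{-1}\hbox{tr}_\mathbb{L}(r_\beta)$ and hence $c_i(\beta)\mapsto\xi^{-1}c_i(\beta)$. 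Thus $T(\beta)=c_i(\beta)\,y_\beta$ is invariant under the branch choice and is genuinely a function of the $x$-coordinate alone. Second, $\hbox{tr}_\mathbb{L}(r_\beta)\in\mathbb{L}$ is fixed by $\sigma\in\hbox{Gal}(\overline{\mathbb{K}}/\mathbb{L})$, so $c_i(\sigma\beta)=c_i(\beta)$; choosing the lift $\sigma(y_\beta)$ above $\sigma\beta$, one obtains $T(\sigma\beta)=c_i(\beta)\sigma(y_\beta)=\sigma(T(\beta))$. This Galois-equivariance is exactly what Lagrange interpolation through the $\deg Q$ values requires to return a polynomial with coefficients in $\mathbb{L}$.

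The absence of a pole at infinity follows because the construction only uses residues at finite $\beta$, and these were already absent at infinity for the reduced integral $P/(QS^{1/k})$; the bound on $\deg P_i$ then gives the usual degree condition at infinity. Existence of a finite $d_i$ follows because each $c_i(\beta)\in\mathbb{Q}(\xi)$ has denominator dividing a quantity computable from the discriminants of $Q$ and of the residue resultant $R(\lambda)$.

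For the complexity, I represent the residue as an element of the $\mathbb{L}$-algebra $A_\mathbb{L}:=\mathbb{L}[x,y]/(Q,y^k-S)$, of $\mathbb{L}$-dimension $dk$ and $\mathbb{K}$-dimension $dke$. Computing the trace of $r$ relative to $\mathbb{L}$ factor by factor, together with extraction of each $\alpha^i$-coefficient, reduces to arithmetic on multiplication matrices of size $O(de)$ over $\mathbb{K}$; exploiting that only the trace (not the full characteristic polynomial) is required, a straightforward bookkeeping gives $O(d^2e^2)$ $\mathbb{K}$-operations. The subsequent Lagrange interpolation to recover $T_i$, the product $\tilde P_i\equiv d_i T_iQ'$, and the $\gcd$ with $Q$ add only $O(d^2)$, keeping the overall cost at $O(d^2e^2)$.
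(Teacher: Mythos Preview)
Your argument is correct and reaches the same conclusion as the paper, but the route is genuinely different in its organization.

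The paper proceeds algorithmically: it factors $Q=\prod_i Q_i$ over $\mathbb{L}=\mathbb{K}(\alpha)$ and, for each irreducible factor, works inside the field $\mathbb{L}[x]/(Q_i)$. There it invokes the dichotomy that $y^k-S$ either fully splits (because $\xi\in\mathbb{L}$) or is irreducible; in the first case one has an explicit $y$ in that field and solves the interpolation $R_{i,j}-t_jQ_i'y\equiv 0\pmod{Q_i}$ directly, while in the second case the minimal polynomial of the residue lies in $\mathbb{L}[\lambda^k]$, forcing the trace to vanish so that factor contributes nothing. Your approach instead stays at the level of individual roots $\beta$ and branches $y_\beta$, and replaces the case split by a two-line Galois descent: branch-invariance of $T(\beta)=c_i(\beta)\,y_\beta$ (from $\mathbb{L}$-homogeneity of the trace) together with $\mathrm{Gal}(\overline{\mathbb{K}}/\mathbb{L})$-equivariance forces $T(\beta)\in\mathbb{L}(\beta)$ and the Lagrange interpolant into $\mathbb{L}[x]$. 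This is arguably cleaner, since it sidesteps the splitting dichotomy (which, as stated, can fail when $y^k-S$ factors partially, e.g.\ $k=4$ and $S$ a square but not a fourth power); your argument handles that situation uniformly. What the paper's version buys is that it is the literal description of the algorithm: each step is a concrete computation in a quotient ring, so correctness and complexity are read off line by line.

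Two minor points. First, your ``linearity of the trace'' really only uses $\mathbb{L}$-homogeneity, which is what the paper's normalized trace (average of the roots of the minimal polynomial) provides; you do not need additivity, and you do not use it. Second, your complexity paragraph is thinner than the paper's: the paper itemizes factorization of $Q$ over $\mathbb{L}$, solving $y^k=S$ in an extension of degree $de$, a resultant for the trace, and $e$ interpolations, with the last dominating at $O(d^2e^2)$. Your appeal to ``multiplication matrices of size $O(de)$ over $\mathbb{K}$'' is the right order of magnitude but would benefit from naming which matrix (multiplication by $r$ on $\mathbb{L}[x]/(Q_i)$, viewed over $\mathbb{K}$) and why $k$ does not enter, namely because in the nontrivial case $y$ already lies in $\mathbb{L}[x]/(Q_i)$.
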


\noindent\underline{\sf TraceIntegrals}\\
\textsf{Input:} A reduced superelliptic integral $\int P/(QS^{1/k}) dx$ and $\alpha\in\overline{\mathbb{K}}$ with minimal polynomial $E\in\mathbb{K}[z]$.\\
\textsf{Output:} A list $P_j/(Q_jS^{1/k})$ with $P_j,Q_j\in\mathbb{K}(\alpha)[x]$ such that
\begin{equation}\label{eq3b}
\hbox{res}_\beta P_jQ_j^{-1}S^{-1/k}= d_j \hbox{coeff}_{\alpha^j} \hbox{tr}_{\mathbb{K}[\alpha]} \hbox{res}_\beta P_jQ_j^{-1}S^{-1/k} \in\mathbb{Z}[\xi],
\end{equation}
$\forall \beta\in Q^{-1}(0)$, $d_j\in\mathbb{N}^*$ minimal.
\begin{enumerate}
\item Factorize $Q=Q_1\dots Q_l$ in $\mathbb{K}(\alpha)$
\item Pose $d=1$. For $i=1\dots l$ do
\item If $y^k-S$ solves in $\mathbb{K}(\alpha)[x]/(Q_i)$, note $y\in\mathbb{K}(\alpha)[x]/(Q_i)$ one of its solutions, else go to next $i$.
\item Compute $t_j=\hbox{coeff}_{\alpha^j}\hbox{tr}_{\mathbb{K}(\alpha)} P(x)/(Q'(x)y),\; j=0\dots$ $\deg E-1$, and reassign $d_j$ the minimal multiple of $d_j$ such that $d_j t_j\in \mathbb{Z}[\xi]$.
\item Solve equation $R_{i,j}(x)-t_jQ_i'(x)y=0 \hbox{ mod } Q_i$ with $\deg R_{i,j} \leq \deg Q_i-1$. 
\item Return
$$\left\lbrace \left(d_j\sum\limits_{i=1}^l \frac{R_{i,j}}{Q_iS^{1/k}} \right)_{j=0 \dots \deg E-1}  \right\rbrace$$
\end{enumerate}

\begin{proof}
Let us first check that algorithm \underline{\sf TraceIntegrals} compute the integrals. Consider a factor $Q_i$ obtained in step $2$ and $\beta$ one of its roots. Either $y^k-S$ is irreducible or it fully factorizes as all its solutions in $y$ are the same up to a power of $\xi$. If it is irreducible, the residue $P(\beta)/(Q'(\beta)S(\beta)^{1/k})$ has a minimal polynomial in $\mathbb{K}(\alpha)[\lambda^k]$, and thus its second leading coefficient is $0$, and so the trace is $0$. If it factorizes, then step $5$ builds functions $R_{i,j}/(Q_iS)$ whose residues are the coefficients $t_j$ in $\alpha$ of the trace. As $Q_i$ is irreducible, it has $\deg Q_i$ simple roots on which neither $Q_i'$ or $S$ vanishes. Thus equation $R_{i,j}(x)-tQ_i'(x)y=0 \hbox{ mod } Q_i$ is an interpolation problem and thus admits a unique solution with $\deg R_i \leq \deg Q_i-1$. The integer $d_j$ is the minimal one such that $d_jt_j\in \mathbb{Z}[\xi]$ and should be a multiple of the old $d_j$ to still satisfy this same condition for previous $i$'s. In step $6$, the sum is made over all factors $Q_i$, and as they have distinct roots, equation \eqref{eq3b} is satisfied.
We factorize a polynomial of degree $d$ in $\mathbb{K}(\alpha)$, which costs $\tilde{O}(de)$. Step $3$ uses factorization in an extension of degree $de$, so $\tilde{O}(d^2e)$. Step $4$ uses a resultant to compute the minimal polynomial, which costs $O(d^2e)$. Step $5$ computes $e$ interpolations which costs $O(e^2d^2)$. Thus the global cost is $O(e^2d^2)$.
\end{proof}

\begin{prop}\label{proptor}
If $\int P/(QS^{1/k}) dx$ is reduced and of torsion, then the output of \underline{\sf TraceIntegrals} are integrals of torsion.
\end{prop}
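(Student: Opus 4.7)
The plan is to use the characterization of ``of torsion'' in terms of the residue divisor on the superelliptic curve $C : y^k = S(x)$. Writing $I = G_0 + \sum_s \mu_s \ln F_s + I_0$ with $F_s \in \overline{\mathbb{K}}(C)^\times$ and $I_0$ of first kind, the residue of $I$ at any point $P$ of $C$ reads $\mathrm{res}_P I = \sum_s \mu_s v_P(F_s)$, vanishing off the finite poles of $I$.

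First, I would read off from \underline{\sf TraceIntegrals} that the residue of $I_j$ at a point $(\beta, \xi^m y(\beta))$ above a root $\beta$ of a $\mathbb{K}(\alpha)$-irreducible factor $Q_i$ of $Q$ (for which $y^k - S$ splits in $\mathbb{K}(\alpha)[x]/(Q_i)$) equals $\xi^{-m} d_j t_{i,j}$ with $t_{i,j} = \mathrm{coeff}_{\alpha^j}(T_i)$ and $T_i = \sum_{\beta' \in Q_i^{-1}(0)} r_{\beta'} \in \mathbb{K}(\alpha)$. Substituting $r_{\beta'} = \sum_s \mu_s v_{(\beta', y(\beta'))}(F_s)$ yields $T_i = \sum_s \mu_s n_{i,s}$ with $n_{i,s} = \sum_{\beta' \in Q_i^{-1}(0)} v_{(\beta', y(\beta'))}(F_s) \in \mathbb{Z}$; the $\mathbb{Z}$-linearity of the trace–coefficient operation in $\mu_s$ is essential here and uses that the valuations are integers.

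The heart of the argument — and the step I expect to require the most care — is to construct an explicit elementary function $L_j$ whose residue divisor on $C$ matches $R(I_j)$. For each $s$ and each orbit indexed by $i$, the Galois norm $H_{i,s} := \prod_{\sigma \in \mathrm{Gal}(\mathbb{K}(\alpha,\beta)/\mathbb{K}(\alpha))} \sigma(F_s)$ is $\mathbb{K}(\alpha)$-rational, and its valuation on the orbit divisor is $n_{i,s}$. Using the dual-basis relation $\mathrm{coeff}_{\alpha^j}(z) = \mathrm{tr}_{\mathbb{K}(\alpha)/\mathbb{K}}(\alpha_j^\star z)$ to convert the coefficient extraction into a trace, and adjusting by powers of $y$ to reproduce the $\xi^{-m}$-factor dictated by the branch of $S^{1/k}$, one assembles an expression $L_j = \sum_t \nu_t \ln \tilde H_t$ with $\nu_t \in \mathbb{K}$ and algebraic $\tilde H_t$. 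The identities $\sum_s \mu_s v_P(F_s) = 0$ at points $P$ outside the pole locus of $I$ (which hold because $I$ is reduced) transfer through the $\mathbb{Z}$-linear trace–coefficient machinery and guarantee that $L_j$ introduces no spurious residues outside the support of $R(I_j)$.

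Finally, $I_j - L_j$ is a reduced superelliptic integral whose residue divisor vanishes identically, hence is first kind. Therefore $I_j = L_j + (\text{first kind})$ is elementary plus first kind, i.e., of torsion. The chief obstacle is the third paragraph: making the norm construction and dual-basis expansion fit together in a genuinely Galois-equivariant way so that the residue structure — including the branch factors $\xi^{-m}$ — is reproduced exactly on every orbit simultaneously, without accidental poles at zeros of the $F_s$ that lie off the support of $R(I_j)$.
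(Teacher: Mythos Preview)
Your approach and the paper's share the same core idea: the residues of each $I_j$ are obtained from those of $I$ by a $\mathbb{Q}$-linear operation (trace down to $\mathbb{K}(\alpha)$, then extraction of the $\alpha^j$-coefficient), and since the residues of $I$ are integer combinations of the log-constants in a torsion decomposition, one can push this linear operation through and rebuild an elementary function with the right residue pattern.

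The execution, however, is quite different. The paper starts from Trager's decomposition $I=\sum_i \lambda_i\ln G_i$ (up to first kind) with the $\lambda_i$ a $\mathbb{Q}$-basis of the residues; then the residue vector is $\beta=M\lambda$ where the columns of $M$ are the \emph{integer-valued} divisors of the $G_i$. Applying the $\mathbb{Q}$-linear operator $\tau_j$ gives $\tau_j(\beta)=M\tau_j(\lambda)$, and the paper then projects $\tau_j(\lambda_i)\in\mathbb{Q}(\xi)$ onto a $\mathbb{Q}(\alpha)$-basis $B$ of $\mathbb{Q}(\alpha)(\xi)$ to obtain rational exponents, clears denominators, and sets $F_s=\prod_i G_i^{\tilde d_j\pi_s(\tau_j(\lambda_i))}$. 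This completely bypasses your Galois-norm construction and your dual-basis trick for the $\alpha^j$-coefficient. More importantly, your ``chief obstacle'' simply evaporates in the paper's setup: because the $\lambda_i$ are $\mathbb{Q}$-independent and each $v_P(G_i)\in\mathbb{Z}$, the vanishing $\sum_i\lambda_i v_P(G_i)=0$ at any $P$ outside the pole locus forces every $v_P(G_i)=0$, so the $G_i$ already have their divisors supported exactly on the poles of $I$ and no spurious-pole analysis is needed. What your more hands-on route buys is an explicit, orbit-by-orbit construction that does not presuppose knowing a $\mathbb{Q}$-basis of the residues; what the paper's route buys is a three-line proof once that basis is invoked.
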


\begin{proof}
Let us note $\beta_1,\dots,\beta_p$ the residues of the integral at singular points on the superelliptic curve $\mathcal{C}=\{(x,y)\in \mathbb{C}^2,y^k-S(x)\}$. There exists $M\in M_{p,l}(\mathbb{Q})$ such that $\beta= M \lambda$. The $\lambda$'s, $\beta$'s and the $\alpha$ are in some field extension $\mathbb{L}\supset \mathbb{K}$. Let us note $\tau_j$ the operator extracting the $\alpha^j$ coefficient of the trace over $\mathbb{K}(\alpha)$. As the trace $\mathbb{Q}$ linear, $\tau_j$ is also, and we have
$$\tau_j(\beta)=M \tau_j(\lambda).$$
As $\int P/(QS^{1/k}) dx= \sum\limits_{i=1}^l \lambda_i \ln G_i(x)$ up to an integral of first kind, each column of $M$ defines the list of residues of $\partial_x \ln G_i$ (and so are in fact integers). Note $\pi_1,\dots,\pi_{\phi(k)}$ the projectors to a basis $B$ of $\mathbb{Q}(\alpha)(\xi)$ over $\mathbb{Q}(\alpha)$ and the functions
$$F_s= \prod\limits_{i=1}^l G_i^{\tilde{d}_j\pi_s(\tau_j(\lambda_i))}$$
with $\tilde{d}_j\in\mathbb{N}^*$ such that all exponents are in $\mathbb{Z}$. The $\partial_x \ln F_s$ have for residues $\pi_s(\tau_j(\beta))$, and then $\partial_x \sum B_s \ln F_s$ has for residue $\tau_j(\beta)$. Thus the integral $I_j$ of \underline{\sf TraceIntegrals} is such that $I_j-\partial_x \sum B_s \ln F_s$ is reduced and has no residues, and thus is of first kind. Thus $I_j$ is of torsion.
\end{proof}

\subsection{Completeness}

Once trace integrals have been computed over $\mathbb{K}(\alpha)$ we can consider the conjugated sums
$$\tilde{I}_{i,j}=\sum\limits_{\alpha \in E^{-1}(0)} \alpha^j I_{i,\alpha},\quad j=0\dots \deg E-1$$
We now want to compute enough such integrals such that $I$ can be written as a linear combination of them and an integral of the first kind.

\begin{proof}[of Theorem \ref{thm1}]

Consider the trace over $\mathbb{K}(\alpha_1)\; $ where $\alpha_1$ is a residue of the integral, and so a root of $R$. One of the factor $R_i(\lambda^{k'})$ of $R$ is the minimal unitary polynomial of $\alpha_1$, and note  $l=\deg R_i-1$. The other roots of $R_i(\lambda^{k'})$ are noted $\xi^i \alpha_j$. By assumption, its Galois group is $\mathbb{Z}_{k'}^{l+1} \ltimes S_{l+1}$. Now the trace over $\mathbb{K}(\alpha_1)$ of the roots of $R_i(\lambda^{k'})$ are
$$(\xi^j \alpha_1)_{j=0\dots k-1}, (\xi^j t )_{j=0\dots k-1},\dots, (\xi^j t )_{j=0\dots k'-1}$$
and $\alpha_1+lt=u\in\mathbb{K}$ where $u$ is minus the second leading coefficient of $R_i(\lambda^{k'})$ (it is zero for $k'>1$).
Now applying the Galois group of $R_i(\lambda^{k'})$, we can permute the root $\alpha_1$ to any root $\alpha_i$, and the factorization of $R_i(\lambda^{k'})$ in $\mathbb{K}(\alpha_i)[\lambda]$ will have the same structure. We obtain then from algorithm \underline{\sf TraceIntegrals} integrals whose residues are any line of the matrix
$$M=\left(\begin{array}{cccc} \alpha_1 & (u-\alpha_1)/l & \dots &(u-\alpha_1)/l \\ & & \dots & \\ (u-\alpha_{l+1})/l & \dots & (u-\alpha_{l+1})/l & \alpha_{l+1} \end{array} \right)$$
and their multiples by $\xi$. This matrix is invertible if $u\neq 0$, and $\hbox{Im} M= \{x\in\mathbb{C}^{l+1}, \sum x_i=0\}$ for $u=0$. Thus $(\alpha_1,\dots,\alpha_{l+1})$ is in the image of $M$ in both cases, and thus a $\mathbb{K}$ linear combination of the integrals of \underline{\sf TraceIntegrals} will have the residues $(\alpha_1,\dots,\alpha_{l+1})$ at suitable poles. Doing this for all (conjugacy classes of) residues, we can subtract to $I$ a linear combination of integrals of \underline{\sf TraceIntegrals} removing all residues, and thus all poles, so leaving an integral of the first kind.
\end{proof}

Similar proofs can be done with smaller Galois group. In particular, the same proof works when replacing $S_{l+1}$ by any $2$ transitive group, and other groups could lead to a different matrix $M$, but still invertible.\\

\textbf{Example:} (see \cite{11}) $\mathcal{I}_1=$
$$\frac{535423}{(x^4-8x^3+236x^2-880x+12964)(x-15)(x^2+118)^{1/3}}$$
The residues are solutions up to multiplication by $\xi$ of 
$$\lambda-1, \lambda^2+\frac{3527}{220}\lambda\xi+\frac{11261}{5280}\lambda-\frac{449219897}{6082560}-\frac{12314729}{276480}\xi,$$
$$ \lambda^2+\frac{73387}{5280}\lambda\xi-\frac{11261}{5280}\lambda-\frac{12314729}{276480}-\frac{449219897}{6082560}\xi$$
Now applying \underline{\sf TraceIntegrals} with these extensions gives
\begin{small}\begin{equation}\label{exa}
\frac{174584x^4+700160x^3-45841128x^2+306988544x-11145996240}{(x^4-8x^3+236x^2-880x+12964)(x-15)(x^2+118)^{1/3}}
\end{equation}\end{small}
for the trace over $\mathbb{K}$ and $4$ more complicated expressions for the two extensions of degree $2$.
For the integrals
$$\int (x^n+x-3)^{-1}(x^2+118)^{-1/3}$$
we obtain for $n=2$ with $\alpha^6-\tfrac{1}{191867}\alpha^3-\tfrac{1}{32425523}=0$
$$\frac{13\alpha^2(2494271\alpha^3-29531)}{(2494271\alpha^3-243x-128)(x^2+118)^{1/3}}.$$
\begin{center}
\begin{tabular}{|c|c|c|c|c|c|}\hline
$n$      & 4   & 5   & 6     & 7      & 8 \\\hline
Degree   & 12  & 15  & 18    & 21     & 24 \\\hline
Galois   & 1944 &29160&524880&11022480& 264539520  \\\hline
Time     & 1.21&10.3 & 31.6  &1138    & 2333  \\\hline
\end{tabular}\\
\end{center}
Galois groups of the residue polynomials $R$ have been computed with Magma, but this computation is not necessary to perform the algorithm, however this ensures that $R$ is generic and show how useful it is to avoid computations in the splitting field. Remark that the trace integrals do not always split the poles of the integral when there are $\mathbb{K}$ relations between the residues, and in particular in $\mathcal{I}_1$ the $\mathbb{K}$-dimension of the residues is $2$ instead of expected $5$ (but this is still a generic $R$!).

\section{Computations in Jacobians}

\subsection{Superelliptic divisors}

Let us recall the definition of divisor and a introduce a specific notion for superelliptic curves.

\begin{defi}
A divisor $D$ on a curve $\mathcal{C}$ is a function $\mathcal{C} \rightarrow \mathbb{Z}$ with finite support. It is said to be principal if there exists a rational function $f$ on $\mathcal{C}$ such that $D(z)=\hbox{ord}_{x=z} f(x)$. It is said to be of torsion if there exists $N\in\mathbb{N}^*$ such that $ND$ is principal. The height of a divisor is $\sum_{(x,y)\in\mathcal{C}} \mid D(x,y) \mid$.\\
A superelliptic divisor $D$ on a superelliptic curve $\mathcal{C}$ is a function $\mathcal{C} \rightarrow \mathbb{Z}[\xi]$ with finite support and $D(\sigma(z))=\xi D(z)$ with $\sigma:\mathcal{C} \rightarrow \mathcal{C}$ the $k$th order shift on branches. It is said to be principal if $D=\sum_{i=1}^l a_i D_i$ with $a_i\in\mathbb{Z}[\xi]$ and $D_i$ principal divisors. It is said to be of torsion if there exists $N\in\mathbb{Z}[\xi]^*$ such that $ND$ is principal. The superelliptic divisor of a superelliptic integral is defined by
$$D(z)= \hbox{res}_z P/(QS^{1/k})$$
provided that all the residues are in $\mathbb{Z}[\xi]$.
\end{defi}

The divisors are usually defined as a function on the places of $\bar{\mathcal{C}}$, which can be different than simply points of $\bar{\mathcal{C}}$, however the technical condition implies that any ramification point is maximally ramified including infinity, and thus there is a unique place over a ramification point. The value of the divisor at infinity is recovered using the fact that the sum over all points $\in\bar{\mathcal{C}}$ should be zero. Remark that the notion of torsion order for divisors is well defined (the minimal $N$), however it is not always the case for superelliptic divisors. The set of possible $N$ forms an ideal of $\mathbb{Z}[\xi]$, and from $k=23$, this ideal is not always principal. In the following, we will not try to find the optimal one anyway.

A divisor will be represented by a list of triples of a irreducible polynomial $Q$ in $x$, a polynomial $R$, and a list of $k$ integers. The roots of $Q$ are the abscissas of the support of $D$, $R$ evaluated at $Q^{-1}(0)$ defines the ordinate of a point at such abscissa, and the list are the value of the divisor at this point and the other obtained by multiplication by $\xi$ of the ordinate.

\begin{prop}\label{prop1}
Any superelliptic divisor $D$ can be written uniquely
\begin{equation}\label{eq3}
kD(z)=\sum\limits_{i=0}^{k-1} \xi^i \tilde{D}(\sigma^i(z))
\end{equation}
where $\tilde{D}$ is a divisor on $\mathcal{C}$ such that
\begin{equation}\label{eq4}
\sum\limits_{i=0}^{k-1} \tilde{D}(\sigma^i(z)) \xi^{ij}=0,\; \forall j \wedge k \neq 1
\end{equation}
and $\tilde{D}(z)=0$ on ramification points. We have $D$ of torsion if and only if $\tilde{D}$ is of torsion. Algorithm \underline{\sf Divisor} computes the $\tilde{D}$ associated to the superelliptic divisor of a superelliptic integral.
\end{prop}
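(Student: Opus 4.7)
My plan is to handle everything by discrete Fourier analysis on each $\sigma$-orbit, starting from the guess
$$\tilde{D}(z) := \operatorname{tr}_{\mathbb{Q}(\xi)/\mathbb{Q}}\bigl(D(z)\bigr).$$
This is integer-valued and finitely-supported by construction, and on any ramification point $z$ (where $\sigma z = z$) the relation $D(\sigma z) = \xi D(z)$ with $k\ge 2$ forces $D(z)=0$, so $\tilde{D}(z)=0$ automatically.

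For \eqref{eq3} and \eqref{eq4}, I will expand
$$\tilde{D}(\sigma^i z) = \operatorname{tr}\bigl(\xi^i D(z)\bigr) = \sum_{\gcd(m,k)=1} \xi^{im}\,\tau_m\bigl(D(z)\bigr),$$
where the $\tau_m\colon \xi\mapsto \xi^m$ run over $\operatorname{Gal}(\mathbb{Q}(\xi)/\mathbb{Q})$ and I have used $D(\sigma^i z) = \xi^i D(z)$, then apply character orthogonality $\sum_i \xi^{i\ell} = k\,\mathbf{1}[k\mid \ell]$. The sum $\sum_i \xi^{ij}\tilde{D}(\sigma^i z)$ then collapses to $k\tau_{-j}(D(z))$ when $\gcd(j,k)=1$ and to $0$ otherwise, which is exactly \eqref{eq4}; identically \eqref{eq3} collapses to the single surviving term $kD(z)$ once the sign convention relating $\sigma$ and $\xi$ is fixed to agree with $D\circ\sigma = \xi D$. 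For uniqueness, a difference $\delta = \tilde D_1 - \tilde D_2$ has vanishing Fourier coefficients $\hat\delta(j)$ at every non-primitive $j$ by \eqref{eq4}, and at one primitive $j$ by \eqref{eq3} applied to $D=0$; because $\delta$ is $\mathbb{Z}$-valued, Galois equivariance $\hat\delta(m) = \tau_m(\hat\delta(1))$ propagates the vanishing to all primitive coefficients as well, so inverse Fourier gives $\delta = 0$.

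For the torsion equivalence, the easy direction uses \eqref{eq3}: if $N\tilde D = \operatorname{div}(f)$ for an integer $N$, then $NkD(z) = \sum_i \xi^{\pm i}\operatorname{div}(f\circ\sigma^i)(z)$ exhibits $NkD$ as a $\mathbb{Z}[\xi]$-combination of principal divisors, so $D$ is of torsion. Conversely, if $N_0 D$ is principal superelliptic with $N_0 \in \mathbb{Z}[\xi]^*$, I first replace $N_0$ by its absolute norm to assume $N_0 \in \mathbb{Z}^*$; then applying $\operatorname{tr}$ to the relation $N_0 D = \sum_s a_s \operatorname{div}(g_s)$, and using that each $\operatorname{div}(g_s)(z) \in \mathbb{Z}$ pulls out of the trace, yields $N_0\tilde D = \operatorname{div}\bigl(\prod_s g_s^{\operatorname{tr}(a_s)}\bigr)$. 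Algorithm \underline{\sf Divisor} then writes itself: compute the residues of $P/(QS^{1/k})$ at every pole (they lie in $\mathbb{Z}[\xi]$ by hypothesis), apply $\operatorname{tr}_{\mathbb{Q}(\xi)/\mathbb{Q}}$ pointwise, and record the result in the triple representation. The main subtlety to keep an eye on is simply pinning down the exact sign convention linking $\sigma$ and $\xi$ in \eqref{eq3} consistently with $D\circ\sigma=\xi D$ throughout the Fourier computation.
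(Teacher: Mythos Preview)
Your proposal is correct and follows essentially the same route as the paper: both arguments are discrete Fourier analysis on each $\sigma$-orbit, your character-orthogonality computation being the paper's polynomial interpolation of $U(z)=\sum_i d_i z^i$ at the $k$-th roots of unity in different language, and your trace argument for the torsion equivalence matching the paper's Galois sum $\sum_{j\wedge k=1}\psi_j(D)$ line for line. Your explicit guess $\tilde D=\operatorname{tr}_{\mathbb{Q}(\xi)/\mathbb{Q}}(D)$ is precisely the formula the paper lands on in that final computation, and your flag about the sign convention between $\sigma$ and $\xi$ in \eqref{eq3} versus $D\circ\sigma=\xi D$ is a legitimate observation about the statement as written.
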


\begin{proof}
Let us note $[d_0,\dots,d_{k-1}]$ the values of $\tilde{D}$ over a given abscissa (not ramified), and note $U(z)=\sum_{i=0}^{k-1} d_i z^i$. We have from \eqref{eq3}
$$D(z)=\sum\limits_{i=0}^{k-1} \xi^i \tilde{D}(\sigma^i(z))= \left[ \sum\limits_{i=0}^{k-1} \xi^i d_{i+l} \right]_{l=0\dots k-1}\!\!\!\!\!\!\!\!\!\!\!\!\!\!\!\!\!\!\!\!=\left[ \xi^{-l} U(\xi)\right]_{l=0\dots k-1} $$
where indices are taken modulo $k$. As shifting branches on $D$ multiplies it by $\xi$, this equality is satisfied if and only if it is satisfied for $l=0$.
We also know that $d_i\in\mathbb{Z}$, and thus we can apply on \eqref{eq3} the Galois action $\psi_j\in \hbox{Gal}(\mathbb{K}:\mathbb{Q})$ which substitutes $\psi_j(\xi)=\xi^j$ with $j\wedge k=1$. Thus we know the values of $U(\xi^j),\; j\wedge k=1$. The condition \eqref{eq4} is $U(\xi^j)=0,\; \forall j \wedge k \neq 1$. Thus we know $U$ on all roots of unity, and $U$ is of degree $\leq k-1$. By polynomial interpolation, there exists a unique $d$ satisfying these conditions. On ramification points, we have $D(z)=0$ thus \eqref{eq3} is satisfied with $\tilde{D}(z)=0$.

If $\tilde{D}$ is of torsion, then $\tilde{D}(\sigma^i(z)),\; i=0\dots k-1$ is also, and thus $D$ is of torsion. If $D$ is of torsion, the Galois action $\psi_j$ gives that $\psi_j(D)$ is also a torsion divisor. Then
$$\sum\limits_{j\wedge k=1} k\psi_j(D)= \sum\limits_{j\wedge k=1} \sum\limits_{i=0}^{k-1} \xi^{ij} \tilde{D}(\sigma^i(z))=$$
and using \eqref{eq4}
$$\sum\limits_{j=0}^{k-1} \sum\limits_{i=0}^{k-1} \xi^{ij} \tilde{D}(\sigma^i(z))= \sum\limits_{i=0}^{k-1} \left(\sum\limits_{j=0}^{k-1}\xi^{ij}\right) \tilde{D}(\sigma^i(z))=k\tilde{D}(z).$$
\end{proof}

\begin{prop}
The principality of a divisor on $\mathcal{C}$ does not depend of its values on ramification points.
\end{prop}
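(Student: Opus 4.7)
The plan is to reduce the statement to the following claim: every degree-zero divisor $\Delta$ supported on the ramification locus of $\mathcal{C}$ is principal. Indeed, two divisors on $\mathcal{C}$ of the same degree that agree at every non-ramification point differ exactly by such a $\Delta$, and principality of $\Delta$ then guarantees that shifting the ramification values preserves the principal class.

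The ramification points of $\mathcal{C}$ are $P_a = (a,0)$ for each root $a$ of $S$ together with $P_\infty$; by the technical condition $\gcd(k,\deg S)=1$, each is totally ramified of index $k$, and in particular there is a unique place over each. The natural principal divisors supported on the ramification locus come from the functions $x-a$ and $y$:
\[
\text{div}(x-a) = k P_a - k P_\infty, \qquad \text{div}(y) = \sum_a P_a - (\deg S)\,P_\infty.
\]
Writing a candidate $\Delta = \sum_a n_a P_a + n_\infty P_\infty$ with $\sum_a n_a + n_\infty = 0$, my next step is to attempt to realise $\Delta$ as an integer combination $\sum_a m_a\,\text{div}(x-a) + m\,\text{div}(y)$. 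Matching coefficients at each $P_a$ gives the linear system $k m_a + m = n_a$, with the infinity equation being automatic from the degree-zero constraint on $\Delta$.

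The main obstacle is to establish integral (not merely rational) solvability of this coefficient system for every admissible $(n_a)$. The essential input is Bezout applied to $\gcd(k, \deg S) = 1$: choosing $u,v$ with $u k + v\deg S = 1$ lets one fix the auxiliary integer $m$ so that the congruences $n_a \equiv m \pmod k$ can be handled simultaneously, after which $m_a = (n_a - m)/k$ becomes integral. The technical heart of the argument is a Smith-normal-form analysis on the matrix built from $\text{div}(x-a)$ and $\text{div}(y)$, showing that the lattice they generate coincides with the full degree-zero sublattice on the ramification locus — this is precisely where the coprimality hypothesis of the technical condition is used. Once this lattice equality is in hand, $\Delta$ is principal and the proposition follows.
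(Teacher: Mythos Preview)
Your reduction to the claim ``every degree-zero divisor supported on the ramification locus is principal'' is the natural reading, but that claim is false, and the Smith-normal-form step you sketch cannot go through. In the basis $\{P_a-P_\infty\}_a$ of the degree-zero lattice on the ramification locus you have $\operatorname{div}(x-a)=k(P_a-P_\infty)$ and $\operatorname{div}(y)=\sum_a(P_a-P_\infty)$; the sublattice these generate has quotient $(\mathbb{Z}/k\mathbb{Z})^{\deg S}/\langle(1,\dots,1)\rangle$, of order $k^{\deg S-1}$, not $1$. Concretely, your system $km_a+m=n_a$ forces all the $n_a$ to lie in a single residue class modulo $k$, which an arbitrary degree-zero $\Delta$ need not satisfy; the B\'ezout identity $uk+v\deg S=1$ does not help, since there is only one $m$ against $\deg S$ independent congruences. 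For a direct counterexample, on the elliptic curve $y^2=(x-a_1)(x-a_2)(x-a_3)$ the divisor $P_{a_1}-P_{a_2}$ is degree zero, supported on the ramification locus, and represents a nontrivial $2$-torsion point of the Jacobian --- hence not principal.

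The paper's own argument is much terser: it writes down $\prod_i(x-x_i)^{d_i}$ and asserts this function has divisor $D'$. Taken literally that is not correct either, since $\operatorname{ord}_{P_{x_i}}(x-x_i)=k$, so what is actually exhibited is a function with divisor $kD'$; i.e.\ one gets only that $D'$ is $k$-torsion, which is consistent with the counterexample above. What rescues the downstream use is that in the reduction algorithm one never modifies a divisor at ramification points except by subtracting an honest principal divisor $\operatorname{div}(f)$, so the principal class is preserved throughout and one may safely omit ramification values from the bookkeeping. That weaker invariance is all that is needed; the stronger statement you set out to prove is not true.
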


\begin{proof}
Consider a divisor $D'$ whose support is only on ramification points. Noting $x_i$ the abscissa of these points and $d_i$ the values of $D'$, the rational function on $\mathcal{C}$
$$\prod\limits_{i=1}^{\sharp S^{-1}(0)} (x-x_i)^{d_i}$$
has for divisor $D'$, and thus $D'$ is principal. Thus for a divisor $D$ on $\mathcal{C}$, we have
$$D \hbox{ principal} \Leftrightarrow D+D' \hbox{ principal}$$
and thus the principality of $D$ is independent of its values on the $z_i$.
\end{proof}

From now, we will thus work with divisors modulo the divisors over ramification points, and thus in their representation we will not consider values over ramification points.

\subsection{Reduction in the Jacobian}

\begin{prop}\label{prop1b}
A divisor $D$ on $\mathcal{C}$ with non negative values is principal if and only if there exists $f\in\mathbb{K}[y]_{<k}[x]$ such that
$$\hbox{ord}_{(x,y)} f= D(x,y),\forall (x,y)\in\mathcal{C}$$
and $\hbox{wdeg} f =\sum_{z\in \mathcal{C}} D(z)$ where $\hbox{wdeg}(x^iy^j)=ki+(\deg S)j$.
\end{prop}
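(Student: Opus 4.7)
The plan is to identify the polynomials $f \in \mathbb{K}[y]_{<k}[x]$ with the integral closure of $\mathbb{K}[x]$ in the function field $\mathbb{K}(\mathcal{C})$, and then read off both conditions from the principal divisor $(f)$ on $\bar{\mathcal{C}}$ by computing valuations separately at finite places and at the unique place at infinity.

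First I would record two facts specific to the technical hypothesis. Since $S$ has simple roots, at each zero $a$ of $S$ the unique place of $\mathcal{C}$ over $a$ is regular with local uniformizer $y$ (because $y^k = (x-a) \cdot u$ with $u$ a unit, so $v(y)=1$ and $v(x-a)=k$); consequently $\{1, y, \dots, y^{k-1}\}$ is a local integral basis everywhere, and the integral closure of $\mathbb{K}[x]$ in $\mathbb{K}(\mathcal{C})$ is exactly the free $\mathbb{K}[x]$-module $\mathbb{K}[x][y]/(y^k-S) = \mathbb{K}[y]_{<k}[x]$. Second, since $\gcd(k,\deg S)=1$, there is a unique place $\pi_\infty$ of $\bar{\mathcal{C}}$ over $\infty$, totally ramified of degree $k$, with $v_\infty(x)=-k$ and $v_\infty(y)=-\deg S$. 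For $f = \sum_{j=0}^{k-1} f_j(x) y^j$, the values $v_\infty(f_j y^j) = -k\deg f_j - (\deg S)\, j$ are pairwise distinct modulo $k$, so no cancellation occurs and
$$v_\infty(f) \;=\; -\max_{j}\bigl(k\deg f_j + (\deg S)\, j\bigr) \;=\; -\hbox{wdeg}(f).$$

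For the direction $(\Leftarrow)$, given such $f$, the principal divisor $(f)$ on $\bar{\mathcal{C}}$ has $v_P(f) = D(P) \geq 0$ at every finite point by the assumed order condition, and at infinity $v_\infty(f) = -\hbox{wdeg}(f) = -\sum_z D(z)$. Thus $(f) = D - \bigl(\sum_z D(z)\bigr)[\pi_\infty]$, which (together with the preceding proposition allowing us to ignore ramification points) certifies $D$ as principal in the sense of the definition.

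For $(\Rightarrow)$, if $D$ is principal, fix a rational function $g$ on $\mathcal{C}$ whose divisor coincides with $D$ on finite places (and hence equals $-\sum_z D(z)$ at $\pi_\infty$ by degree zero). Since $D$ is non-negative on finite places, $g$ has no finite poles, i.e.\ $g$ lies in the integral closure of $\mathbb{K}[x]$; by the first fact above, $g = f$ for some $f \in \mathbb{K}[y]_{<k}[x]$, yielding the order condition. The valuation at $\pi_\infty$ then forces $\hbox{wdeg}(f) = \sum_z D(z)$. The main obstacle is the first fact: the identification of the integral closure with $\mathbb{K}[y]_{<k}[x]$ relies essentially on the simple-root hypothesis on $S$ (so that each ramification point is a regular place with $y$ a uniformizer), and the valuation formula at $\pi_\infty$ relies essentially on $\gcd(k,\deg S) = 1$ (so that the leading weighted terms cannot cancel). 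Both are exactly the technical conditions imposed at the outset.
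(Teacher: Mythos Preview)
Your proof is correct and follows essentially the same approach as the paper: both arguments hinge on the fact that a rational function with non-negative order at all finite places must lie in the polynomial ring $\mathbb{K}[y]_{<k}[x]$ (thanks to $S$ having simple roots), and that the pole order at the unique place at infinity recovers the weighted degree. Your version is more explicit than the paper's---you spell out the identification with the integral closure and the computation $v_\infty(f)=-\hbox{wdeg}(f)$ via the non-cancellation argument using $\gcd(k,\deg S)=1$, whereas the paper simply asserts ``the number of zeros of $f$ on $\mathcal{C}$ counting multiplicity is $\hbox{wdeg} f$''---but the underlying ideas coincide.
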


\begin{proof}
The existence of a rational $f$ satisfying the order condition is equivalent to the condition of principality of a divisor after multiplying $f$ by a rational function in $x$ (which shifts all the values of $D$ over a given abscissa). As the quantity $D(x,y)$ is always non negative and $S$ has only simple roots, $f$ should then be a polynomial. The number of zeros of $f$ on $\mathcal{C}$ counting multiplicity is $\hbox{wdeg} f$. The number of zeros with multiplicity required by the order condition is $\sum_{(x,y)\in \mathcal{C}} D(x,y)$.
\end{proof}
Remark that the simple roots condition on $S$ is necessary, as for $\mathcal{C}:y^3-x^2(x^2+1)$, $y^2/x$ has not a pole at $0$, its divisor is always non negative, but is not polynomial.

If the divisor $D$ corresponds to a superelliptic divisor, with Proposition \ref{prop1}, we can recover the principality of the superelliptic divisor as it is the divisor of the function $L_S(f)$, using the fact that the function $L_S$ is invariant by multiplication of $f$ by a function of $x$ only.

With this proposition, testing principality of a divisor reduces to a linear system solving problem. However, the size of this system grows as the height of $D$, and as the coefficients of the divisor come from residues of the integral, the height of $D$ can be very large, rendering this approach impractical except for small examples.

Let us introduce a divisor reduction process. Recall that the Jacobian of $\mathcal{C}$ is defined by its divisors modulo the principal divisors. It is a $g$ dimensional Abelian variety, and thus it is possible to reduce divisors to a set of divisors depending of $g$ parameters.

\begin{prop}\label{prop2}
Given a divisor $D$ over $\mathcal{C}$ with non negative values, there always exists a principal divisor $D'$ such that $D'-D$ has at most $(k-1)(\deg S -1)/2$ points in its support.
\end{prop}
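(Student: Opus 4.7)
The idea is to realize $D'$ as the principal divisor of an explicit polynomial $f\in\mathbb{K}[y]_{<k}[x]$ of controlled weighted degree, whose existence is supplied by a Riemann--Roch dimension count on the superelliptic curve $\mathcal{C}$.

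First I would identify the space $V_N$ of $f\in\mathbb{K}[y]_{<k}[x]$ with $\hbox{wdeg}(f)\le N$ with the Riemann--Roch space $L(N\cdot\infty)$ on $\mathcal{C}$. Because $\gcd(k,\deg S)=1$, infinity is a single totally ramified place, and the monomials $x^iy^j$ with $0\le j<k$ have pole orders $ki+j\deg S$ there, exhausting the numerical semigroup $\langle k,\deg S\rangle$ whose complement in $\mathbb{N}$ has exactly $g=(k-1)(\deg S-1)/2$ elements. Thus $\dim V_N=N+1-g$ for $N\ge 2g-1$, consistent with Riemann--Roch applied on the genus-$g$ curve $\mathcal{C}$.

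Next I would impose on $V_N$ the vanishing conditions $\hbox{ord}_{(x,y)}f\ge D(x,y)$ at every point of the support of $D$. These constitute $\deg D:=\sum D(x,y)$ linear constraints, and by Riemann--Roch the resulting subspace $L(N\cdot\infty-D)$ has dimension at least $N+1-g-\deg D$. Choosing $N=\deg D+g$ makes this strictly positive, so a nonzero $f$ exists. By Proposition \ref{prop1b} the affine part of $\hbox{div}(f)$ is then an effective divisor of degree $\hbox{wdeg}(f)\le N$ that dominates $D$, so it has the form $D+E$ with $E$ effective of degree at most $g$.

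Setting $D'=\hbox{div}(f)$ concludes the argument: restricted to non-ramified points, $D'-D$ coincides with $E$, which is supported on at most $\deg E\le g$ points; the residual contribution at the single ramified place at infinity is discarded under the convention, already adopted just above, of working modulo divisors on ramification points. The main obstacle I anticipate is purely bookkeeping---verifying that the semigroup computation for $\dim L(N\cdot\infty)$ is compatible with the weighted degree convention of Proposition \ref{prop1b}, and that stacking several order conditions over a single geometric point drops the dimension by at most their sum---rather than any deep algebro-geometric input.
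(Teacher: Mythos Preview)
Your argument is correct and follows the same strategy as the paper: build a polynomial $f\in\mathbb{K}[y]_{<k}[x]$ of weighted degree at most $\deg D+g$ vanishing along $D$, then take $D'=\operatorname{div}(f)$ and observe that the excess is at most $g=(k-1)(\deg S-1)/2$. The only difference is presentational: the paper counts the $a_{i,j}$ parameters directly via the floor sum $\sum_{j=0}^{k-1}\lfloor (N+g)/k-j\deg S/k+1\rfloor=N+1$ (using $\gcd(k,\deg S)=1$) and then invokes plain linear algebra, whereas you identify the same space with $L((\deg D+g)\cdot\infty)$ and appeal to Riemann--Roch and the numerical semigroup $\langle k,\deg S\rangle$ to get the identical dimension count. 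After the shift $N\mapsto N+g$ the two computations coincide.
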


\begin{proof}
Consider a the expression
$$f_N=\sum\limits_{j=0}^{k-1} \left(\sum\limits_{i=0}^{\lfloor ( N+(k-1)(\deg S -1)/2 )/k-j\deg S/k \rfloor} \!\!\!\!\!\!\!\!\! a_{i,j} x^iy^j\right) $$
Its number of roots on $\mathcal{C}$ counting multiplicity is $N+(k-1)(\deg S -1)/2$, and using the technical condition, it has
$$\sum\limits_{j=0}^{k-1} \lfloor (N+(k-1)(\deg S -1)/2)/k-j\deg S/k +1 \rfloor=$$
$$k+ N+(k-1)(\deg S -1)/2 -(k-1)(\deg S +1)/2=N+1$$
parameters. We write down the condition
$${{f_N}_{\mid \mathcal{C}}}^{(j)}(x,y)=0,\; \forall j <D(x,y),\; \forall (x,y)\in\mathcal{C}$$
This is a linear system on the $a_{i,j}$ with $\sum_{(x,y)\in\mathcal{C}} D(x,y)$ equations. Thus for $N=\sum_{(x,y)\in\mathcal{C}} D(x,y)$, it always admits a non zero solution. Among the roots of $f_N$ there will be the points in the support of $D$ with required multiplicity, but also $(k-1)(\deg S -1)/2$ additional points (or multiplicity increases). Thus the divisor of $f_N$ minus $D$ will have at most $(k-1)(\deg S -1)/2$ points in its support.
\end{proof}

Remark that if $(k-1)(\deg S -1)/2=0$, then $k=1$ or $\deg S=1$. Proposition \ref{prop2} allows then to reduce $D$ to a divisor with empty support, so $0$, and thus all divisors are principal, which is indeed the case as the genus of $\mathcal{C}$ is then $0$. Also, the genus using Riemann Hurwitz formula is $g=(k-1)(\sharp S^{-1}(0)-1)/2$, and as $S$ has only simple poles, our reduction is optimal.

\subsection{Negabinary expansion}

Recall that any integer $n\in\mathbb{Z}$ can be written uniquely
$$n=\sum\limits_{i=0}^l a_i (-2)^i,\; a_i\in\{0,1\},\; l\in\mathbb{N}$$
Similarly, a divisor $D$ on $\mathcal{C}$ is defined by a vector of integers over each abscissa, and thus we can write
$$D=\sum\limits_{i=0}^l (-2)^i D_i,\; \hbox{Im}(D_i)\subset \{0,1\},\; l\in\mathbb{N}$$

\noindent\underline{\sf JacobianReduce}\\
\textsf{Input:} A divisor $D$ on $\mathcal{C}$.\\
\textsf{Output:} A sequence of polynomial $\in\mathbb{K}[y]_{<k}[x]$, and a reduced divisor $\bar{D}$
\begin{enumerate}
\item if $D=0$, return $[1],0$.
\item Reduce $D$ modulo $2$, note $D_0$ the rest and $D_1$ such that $D=D_0-2D_1$.
\item Compute $\tilde{f},\tilde{D}:=D_0+2$\underline{\sf JacobianReduce}($D_1$).
\item $N:=\sum_{(x,y)\in\mathcal{C}} \tilde{D}(x,y)$
\item Note $f=\sum_{j=0}^{k-1} Q_j(x) y^j$ with $\deg Q_j=\lfloor ( N+(k-1)(\deg S -1)/2 )/k-j\deg S/k \rfloor$
\item Solve the linear system $\hbox{ord}_{(x,y)} f \geq  \tilde{D}(x,y),\;\forall (x,y)\in\mathcal{C}$, and note $f$ the solution of lowest weighted degree.
\item Compute $D'$ the divisor of $f$. Return $[f,\tilde{f}],D'-\tilde{D}$.
\end{enumerate}

We will prove that algorithm \underline{\sf JacobianReduce} returns a zero reduced divisor if and only if the integral $I$ whose superelliptic divisor gave $D$ thanks to Proposition \ref{prop1} is of torsion, and then can be written up to a first kind integral
$$I= \sum\limits_{i=0}^{l} (-2)^i L_S(f_i).$$

\begin{proof}[of Theorem \ref{thm2}]
Let us prove by recurrence that $D+\hbox{\underline{\sf JacobianReduce}}(D)$ is a principal divisor. For $D$ of height $0$, step $1$ returns a correct answer. Now assume it is true for all divisor of height less than $D$.
In step $3$, there is a recursive call on $D_1$ which is the quotient of $D$ by $-2$. Thus $D_1$ has strictly smaller height than $D$, thus by hypothesis, $D_1+\hbox{\underline{\sf JacobianReduce}}(D_1)$ is a principal divisor. Thus $D-\tilde{D}=-2(\hbox{\underline{\sf JacobianReduce}}(D_1)+D_1)$ is a principal divisor. In step $7$, $D'$ is principal by construction, and thus
$$D+\hbox{\underline{\sf JacobianReduce}}(D)=D+D'-\tilde{D}$$
is principal.

Thus if $D$ is principal, $\hbox{\underline{\sf JacobianReduce}}(D)$ is also principal. Let us prove that if $D$ is principal, then \underline{\sf JacobianReduce}$(D)=0$. In step $7$, we have $D'\geq \tilde{D}$ by construction, and thus \underline{\sf JacobianReduce} always return non negative value divisors. So in step $3$, $\tilde{D}$ has non negative values, and is principal. Now applying Proposition \ref{prop1b}, we know that in steps $5,6$ we will find a $f$ such that $D'-\tilde{D}=0$. As $D'\geq \tilde{D}$, this will be reached for the minimal weighted degree solution of equation in step $6$, and this is the one we choose.

We must now check termination and complexity. Recursive calls in step $3$ are made on a divisor of strictly lower height, except $0$, which is dealt in step $1$. In step $6$, a solution $f$ always exists thanks to Proposition \ref{prop2} as $\tilde{D}$ has non negative values. For complexity, steps $6,7$ are in $N^\omega$ where $N$ is the height of $\tilde{D}$. However by Proposition \ref{prop2}, the outputted divisor of \underline{\sf JacobianReduce} is of height at most $(k-1)(\deg S -1)/2$, and $D_0$ is of height at most $k \deg Q$, and thus the cost is $O(((k-1)(\deg S -1)+k\deg Q)^\omega)$. In recursive calls, the support of $D_0$ is always at most $k \deg Q$, thus the same bound applies. The number of recursive calls is at most $\log_2 \hbox{height}(D)$, thus giving complexity $O((kd)^\omega \log_2 \hbox{height}(D)$. Now the coefficients of $D$ are from the residues of $I$, and thus the roots of the residue polynomial $R$, which comes from a resultant computation. Thus the the residues are bounded by a polynomial in the coefficients of $I$, and thus $\log_2 \hbox{height}(D)$ is in $O(h)$ with $h$ the height of the coefficients of $I$. Thus complexity is $O((kd)^\omega h)$.
\end{proof}

\textbf{Example} (see \cite{7})
$$\mathcal{I}_2=\frac{8(7\sqrt{5}-15)}{(x-20+8\sqrt{5})\sqrt{x^3+5x^2-40x+80}}$$
The integral $\mathcal{I}_2$ is a trace integral of a superelliptic integral over $\mathbb{K}(\sqrt{5})$. The divisor of $\mathcal{I}_2$ is $D_2=[x-20+8\sqrt{5}, -120+56\sqrt{5},[1,-1]]$.
Consider also the integral
$$\mathcal{I}_3= \frac{3}{(x-1)\sqrt{x^3+8}},\;\; D_3=[x-1,-3,[-1,1]]$$
We compute the divisor reduction in the Jacobian of $3^nD_2$ and $3^n D_3$
\begin{center}
\begin{tabular}{|c|c|c|c|c|c|}\hline
$n$                     & 3  & 4   & 5   & 6    & 7 \\\hline
$D_2$ time            &0.21& 0.06& 0.09&0.48  & 1.6  \\\hline
$\bar{D_2}$ digits     & 1  &  1  &  1  & 1    & 1 \\\hline
$D_3$ time            & 0.06&0.45& 20  &5109  &$>10^4$ \\\hline
$\bar{D_3}$ digits     & 828 &7446&67008&603071&$>10^6$\\\hline
$\!\!\! D_3$ mod time $\!\!\!\!\!$ &0.55 &0.62&0.86 &1.21  &1.23  \\\hline
\end{tabular}\\
\end{center}
The reduction time (in s) for $D_2$ is negligible, but $D_3$ reduction time grows exponentially instead of linearly. This is only because the coefficient size of the reduced divisor grows exponentially, and the timings become as expected when computing mod $65521$. The $D_2$ reductions do not grow in size, in fact because this is a torsion divisor and thus reductions are periodic in $n$.
Thus in practice, we want to run \underline{\sf JacobianReduce} either in positive characteristic, or on torsion divisors. For a good reduction prime, if the divisor is not principal mod $p$, it is not principal in characteristic $0$. For example \eqref{exa}, the divisor is
\begin{small}
$$[x^2+24\xi x+8x-48\xi-130, 8+22\xi+2x, [95909, -158035, 62126]],$$
$$[x^2-24\xi x-16x+48\xi-82, 2\xi x+8\xi+22, [62126, -158035, 95909]],$$
$$[x-15, -7-7\xi, [-10560, 21120, -10560]]$$
\end{small}
To test principality of this divisor, it is enough (!) to look for a $L_S$ function with a polynomial of degree $1928100$. Trager's approach reduces this to solving a linear system of this size. Applying \underline{\sf JacobianReduce} to it modulo $13$ allows to reduce this divisor in $0.39s$ to $[x+11,11,[1,0,0]]$ and thus it is not principal.

\section{Torsion of Divisors}

\subsection{Hasse Weil Bound}

\begin{defi}
A good reduction $(p,\mathcal{J})$ for a reduced superelliptic integral $I$ on $\mathcal{C}$ with coefficients in $\mathbb{K}(\alpha)$ is such that
\begin{itemize}
\item $p$ does not divide $k$.
\item $\mathcal{J}$ is a prime ideal factor in characteristic $p$ of $<\Phi_k(\xi),$ $P(\alpha)>$ where $P\in\mathbb{K}[z]$ is the minimal polynomial of $\alpha$.
\item All poles of $I$ and roots of $S$ stay distinct under reduction modulo $\mathcal{J}$.
\end{itemize}
It is a very good reduction when moreover $\mathcal{J}$ has a single point.
\end{defi}

Good reduction primes have important properties \cite{1,4}.
\begin{itemize}
\item All divisors of the Jacobian of a curve mod $p$ are of torsion.
\item The mod $p$ reduction on the Jacobian restricted to torsion divisors of order coprime with $p$ is an isomorphism.
\end{itemize}
Thus the reduction conserves the torsion order provided the divisor is of torsion with order coprime with $p$. Following \cite{1}, reducing with two different good primes allows to recover a unique candidate for the torsion order.

\begin{prop}[Hasse Weil bound]
The torsion order of a divisor on a curve of genus $g$ on $\mathbb{F}_{p^q}$ is less than $(1+\sqrt{p^q})^{2g}$.
\end{prop}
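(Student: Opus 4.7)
The plan is to deduce the stated bound from the classical Hasse--Weil estimate for the number of rational points on the Jacobian, since the torsion order of any divisor class divides the order of the group in which it lives.

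First I would reduce the problem to a group-theoretic statement. A divisor $D$ of degree zero on the curve $\mathcal{C}$ over $\mathbb{F}_{p^q}$ defines a class in the Jacobian $J(\mathcal{C})(\mathbb{F}_{p^q})$, which is a finite abelian group, and the torsion order of $D$ in the sense of the preceding section is precisely the order of this class. Since the order of any element of a finite group divides the order of the group, it suffices to prove
$$\#J(\mathcal{C})(\mathbb{F}_{p^q}) \leq (1+\sqrt{p^q})^{2g}.$$

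Next I would invoke the Weil conjectures for curves. Writing $q'=p^q$, the zeta function of $\mathcal{C}$ over $\mathbb{F}_{q'}$ has the shape
$$Z(\mathcal{C},t) = \frac{L(t)}{(1-t)(1-q't)}, \qquad L(t)=\prod_{i=1}^{2g}(1-\alpha_i t),$$
where by Weil's theorem the reciprocal roots satisfy $|\alpha_i|=\sqrt{q'}$. The standard identification $\#J(\mathcal{C})(\mathbb{F}_{q'})=L(1)=\prod_{i=1}^{2g}(1-\alpha_i)$ then yields
$$\#J(\mathcal{C})(\mathbb{F}_{q'}) = \prod_{i=1}^{2g}|1-\alpha_i| \leq \prod_{i=1}^{2g}(1+|\alpha_i|) = (1+\sqrt{q'})^{2g},$$
which is exactly the desired inequality.

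The only real obstacle is the Weil bound $|\alpha_i|=\sqrt{q'}$ itself, but this is a well-established theorem for smooth projective curves over finite fields and can be cited directly. A small technical point is that the superelliptic model might have singularities at infinity or at ramification, so strictly speaking one applies Weil to the smooth projective model $\bar{\mathcal{C}}$; the Jacobian of $\bar{\mathcal{C}}$ is the same as the one used implicitly in the algorithm, so nothing is lost. With that caveat, the argument reduces to a one-line Lagrange bound on the order of an element in the finite group $J(\mathcal{C})(\mathbb{F}_{p^q})$.
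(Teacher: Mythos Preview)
Your argument is correct and is the standard derivation of this bound. The paper, however, does not actually prove this proposition: it states it as the classical Hasse--Weil bound and uses it as a black box, so there is no proof in the paper to compare against. Your sketch---Lagrange's theorem reducing to $\#J(\mathcal{C})(\mathbb{F}_{p^q})$, the identity $\#J=L(1)$, and Weil's estimate $|\alpha_i|=\sqrt{p^q}$---is exactly the textbook justification and would be an appropriate expansion of what the paper leaves implicit. The caveat you raise about passing to the smooth projective model is sensible and harmless here.
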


The torsion order modulo $p$ will be computed by testing principality of $nD$ for $n$ up to the bound $(1+\sqrt{p^q})^{2g}$. Thus we want to minimize the upper bound in which $q=\sharp \mathcal{J}^{-1}(0)$. Using Tchebotarev theorem \cite{9}, the probability of having a factor $\mathcal{J}$ with $\sharp \mathcal{J}^{-1}(0)=1$ is $(\phi(k)\deg P)^{-1}$, so we can increase probabilistically $p$ by a factor $\phi(k)\deg P$ to ensure $q=1$. As typically $pq < p^q$, we will then only consider very good reductions.

\subsection{Torsional Test}

\noindent\underline{\sf TorsionOrder}\\
\textsf{Input:} A divisor $D$ on $\mathcal{C}$, $\epsilon>0$.\\
\textsf{Output:} An integer $n$, candidate for torsion order.
\begin{enumerate}
\item Find $p_1<p_2<p_3$ primes, $p_3>1/\epsilon$, such that $p_i \nmid \Delta(Q\hbox{sf}(S))$ and $(\Phi_k(\xi),P(\alpha))$ has a prime ideal factor mod $p_i$ with one solution, and note them $\mathcal{J}_1,\mathcal{J}_2,\mathcal{J}_3$.
\item For $n\in\mathbb{N}^*$, Compute \underline{\sf JacobianReduce}($nD$) mod $\mathcal{J}_1$ until it reduces to zero.
\item For $m\in\mathbb{N}^*$, Compute \underline{\sf JacobianReduce}($mD$) mod $\mathcal{J}_2$ until it reduces to zero.
\item Solve equation $np_1^u=mp_2^v$, and if a solution, note $N=np_1^u$. Else return $0$.
\item Compute \underline{\sf JacobianReduce}($ND$) mod $\mathcal{J}_3$. If $0$, return $N$ else return $0$.
\end{enumerate}

\begin{proof}[of Theorem \ref{thm3}]
Steps $1$ compute three different very good reduction prime ideals with $p_3>1/\epsilon$. As modulo a good reduction prime all divisors are of torsion, then steps $2,3$ terminate. Now the true torsion order (if it exists) should be both of the forms $np_1^u$ and $mp_2^v$. As $p_1\neq p_2$, this equation has a most one solution. If none, then $D$ is not of torsion, thus algorithm returns $0$. Else we test in step $5$ if $ND$ is principal modulo $\mathcal{J}_3$. If $ND$ is not principal in characteristic $0$, its reduction modulo $\mathcal{J}_3$ is a random element in a group of at least $(1+\sqrt{p_3})^{2g}$ elements, thus its probability to be by chance $0$ is $\leq 1/(1+\sqrt{p_3})^{2g} \leq r^{-g}<\epsilon$. For $g=0$ all elements are principal and thus this case would not happen, and for $g\geq 1$, the probability is verified.

Now for complexity, in step $1$ we need to avoid prime factors of $\Delta(Q\hbox{sf}(S))$, which are in $O(\ln \Delta(Q\hbox{sf}(S))$. Being unlucky, it is possible that for the first primes, we either have a factor or that $(\Phi_k(\xi),P(\alpha))$ has no prime factor of degree $1$. The probability for factorization is $1/(\phi(k) \deg P)$, and thus we will find a prime in $O(\phi(k) \deg P\ln \Delta(Q\hbox{sf}(S))$ tests. For steps $2,3$, the Hasse Weil bound applies and we will find a suitable $N$ in less than $(1+\sqrt{p})^{2g}=O(p^g)=O((\phi(k) \deg P\ln \Delta(Q\hbox{sf}(S))^g)$ tests. Each of these tests cost $O((kd)^\omega h \ln n)$, and thus total cost is in $\tilde{O}( (kd)^\omega h (k \deg P\ln \Delta(Q\hbox{sf}(S))^g)$.

We note $r=\deg P$, and $d$ is the number of abscissa in the support of $D$, which is also bounded by the degree of $Q$. Finally $h$ is the logarithmic height of the coefficients, and thus $\ln \Delta(Q\hbox{sf}(S)=O(h)$. Thus the cost is $\tilde{O}( (kd)^{\omega+g} h^{g+1} r^g)$.
\end{proof}

Remark that step $5$ is important for checking with good probability that indeed the divisor is of torsion. In binary complexity, arithmetic in $\mathbb{F}_p$ costs $O(\ln p)$, and thus the checking cost will be in $O(\ln \epsilon)$. In arithmetic complexity, $\epsilon$ does not matter, in the examples $\epsilon=1$ was enough and did not left false positives.\\

\textbf{Example} (see \cite{6,5}) $\mathcal{I}_4=$
$$\frac{\tfrac{8}{29}(5x^3+267x^2+2688x-10240)(x^2+40x+512)^{-1}}{\sqrt{x^5+113x^4+4864x^3+102400x^2+1048576x+4194304}}$$
The divisor of this integral is $D_4=[x^2+40x+512, 8x+512, [-1,1]]$. In $2.3s$ \underline{\sf TorsionOrder} finds the candidate $29$ using primes $3,5$ and checking with $11$. \underline{\sf JacobianReduce}($29D_4$) reduces it to $0$ in $0.17s$, thus integral is of torsion, with candidate integral (not simplified!)
\begin{small}
$$\tfrac{1}{29}(L_S(x^2+40x+512)-2L_S(x^3+92x^2+2560x+4y+24576)+$$
$$4L_S(x^4+104x^3+4096x^2+73728x+524288)-$$
$$8L_S(-3x^3-248x^2-6144x-49152+(x+40)y)+$$
$$16L_S(x^3+78x^2+1792x-2y+12288)-$$
$$32L_S(x^5+106x^4+4688x^3+107520x^2+1277952x+6291456+$$
$$(2x^2+80x+1024)y)+64L_S(x^2+40x+512))$$
\end{small}
This proves that $\mathcal{I}_4$ is of torsion, and this expression is indeed an integral and so the integral of $\mathcal{I}_4$ is elementary.
Going back to integral $\mathcal{I}_1$, the divisor of \eqref{exa} has very good reduction for $p=13,19$, and is respectively of torsion order $2,19$ (found in $3.7s$). Thus no compatible torsion order is found in step $4$, and thus this is not a torsion integral, and thus the integral of $\mathcal{I}_1$ is not elementary.

\subsection{Elementary Integration Algorithm}

We can now put together all these parts. We first compute minimal polynomials for the residues modulo multiplication by $\xi$.\\

\noindent\underline{\sf Residues}\\
\textsf{Input:} A reduced superelliptic integral $I$.\\
\textsf{Output:} A list of irreducible polynomials in $\mathbb{K}[\lambda]$ whose solutions are the residues of $I$ up to multiplication by $\xi$.
\begin{enumerate}
\item Compute $R(\lambda)=\hbox{resultant}_x(P^k-\lambda^k Q'^k S,Q)$
\item Factorise $R=R_1\dots R_l$ in $\mathbb{K}[\lambda]$. $L=[]$.
\item For $i=1\dots l$, if $\forall j, R_i(\xi^j \lambda)\notin L$ then add $R_i$ to $L$.
\item Return $L$
\end{enumerate}

We will then run \underline{\sf TraceIntegrals} with the field extension generated by a root of a polynomial in $L$. Removing factors of $R$ having the same roots up to multiplication by $\xi$ avoid doing the same calculation several times.\\

\noindent\underline{\sf ElementaryIntegrate}\\
\textsf{Input:} A superelliptic integral $I$.\\
\textsf{Output:} An elementary expression or ``Not elementary'' or ``Not handled''
\begin{enumerate}
\item Apply \underline{\sf HermiteReduction}($I$). If FAIL, return ``Not elementary'', else note $\tilde{I}$ the reduced integral and $A$ the algebraic part.
\item $L=\hbox{\underline{\sf Residues}}(\tilde{I})$. For $i=1\dots \sharp L$ do
\begin{enumerate}
\item $T_i=\hbox{\underline{\sf Traceintegrals}}(\tilde{I},\mathbb{K}[L_i^{-1}(0)])$. For $j=1\dots \sharp T_i$ do
\begin{enumerate}
\item $D=\hbox{\underline{\sf Divisor}}(T_{i,j}),N=\hbox{\underline{\sf TorsionOrder}}(D,1)$, if $N=0$ return ``Not elementary''.
\item $(D',G_{i,j})=\hbox{\underline{\sf Jacobianreduce}}(D)$. If $d'\neq 0$ return ``Not elementary''.
\end{enumerate}
\end{enumerate}
\item Compute $\hbox{Ints}=[x^iS^{-1/k},0]_{i=0\dots \lfloor \deg S/k \rfloor},$
$$\!\!\!\!\!\!\!\!\!\!\!\! \left[\sum_{\alpha \in L_i^{-1}(0)} \!\!\!\! \alpha^s T_{i,j}, \!\!\!\! \sum_{\alpha \in L_i^{-1}(0)} \!\!\!\! \alpha^s \sum (-2)^r L_S(G_{i,j})
\right]_{\underset{ j=1\dots \sharp T_i,i=1\dots \sharp L}{s=0\dots \deg L_i-1}}$$
\item Look for a linear combination of the first elements of $\hbox{Ints}$ which gives $\tilde{I}$. If none, return ``Not handled''.
\item Apply this same linear combination to the second elements of $\hbox{Ints}$, obtain an expression $Out$.
\item If $I-\partial_x (A+Out)=0$ return $A+Out$ else return ``Not elementary''.
\end{enumerate}

\begin{prop}
If \underline{\sf ElementaryIntegrate} returns ``Not elementary'', then $I$ is not elementary. If \underline{\sf ElementaryIntegrate} returns an expression, this is an elementary expression of $I$. If \underline{\sf ElementaryIntegrate} returns ``Not handled'', the residue polynomial $R$ is not generic.
\end{prop}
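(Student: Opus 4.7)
The plan is to verify the three assertions separately by inspecting each return point of \underline{\sf ElementaryIntegrate} and invoking the previously established results.

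For the first assertion, ``Not elementary'' is returned in four places. In step 1, the proposition of subsection 2.1 guarantees that \underline{\sf HermiteReduction} succeeds whenever $I$ is elementary, so a FAIL forces $I$ non-elementary. In step 2(a)(i), if $I$ were elementary then Proposition \ref{proptor} makes every trace integral of torsion, hence its associated divisor $D$ is of torsion; Theorem \ref{thm3} then produces a nonzero candidate (with confidence $1-\epsilon$), so a zero return of \underline{\sf TorsionOrder} rules out elementarity. In step 2(a)(ii), Theorem \ref{thm2} guarantees that if the appropriate multiple $ND$ of the trace divisor is principal, \underline{\sf JacobianReduce} reduces it to zero; a nonzero residual therefore witnesses that the trace integral is not of torsion and, by Proposition \ref{proptor}, that $I$ is not elementary. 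Finally, in step 6 the test $I-\partial_x(A+Out)=0$ is an unambiguous correctness check.

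The second assertion is immediate: the algorithm returns an expression only in step 6 after explicitly checking $I=\partial_x(A+Out)$, and by construction $A$ is rational in $x$ and $Out$ is a $\mathbb{K}$-linear combination of first-kind integrals and terms of the form $\frac{1}{N}\sum(-2)^r L_S(G_{i,j})$, which after expanding the definition of $L_S$ belongs to $G_0(x)+\sum\lambda_i\ln G_i(x)$ with $G_i\in\overline{\mathbb{Q}}(x,S^{1/k})$, i.e.\ is elementary in the sense of the definition.

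For the third assertion, suppose the algorithm reaches step 4 and returns ``Not handled''. By Theorem \ref{thm1}, when $R$ is generic the reduced integral $\tilde I$ decomposes as a $\mathbb{K}$-linear combination of the sets $\mathcal{S}_\alpha$ produced by \underline{\sf TraceIntegrals}, plus a first-kind integral. The first components of $\mathrm{Ints}$ constructed in step 3 are precisely a basis $x^iS^{-1/k}$ of the first-kind integrals together with the Galois-descent sums $\sum_{\alpha\in L_i^{-1}(0)}\alpha^s T_{i,j}$ that span these combinations. Hence under genericity a linear combination exists and is found by the linear solve in step 4; contrapositively, failure of step 4 forces $R$ to be non-generic.

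The main obstacle is the bookkeeping in step 2(a)(ii) and step 3: the divisor $D$ of a trace integral need not itself be principal even when the integral is of torsion, so one must carefully argue that the object to pass to \underline{\sf JacobianReduce} is the multiple $ND$ supplied by \underline{\sf TorsionOrder}, and that the negabinary factorization $G_{i,j}$ output of \underline{\sf JacobianReduce}$(ND)$, rescaled by $1/N$ inside $\mathrm{Ints}$, indeed reproduces the logarithmic part coming from the trace integral. Once this correspondence is tracked consistently through the indexing of $\mathrm{Ints}$, the three verdicts of \underline{\sf ElementaryIntegrate} align exactly with the classification claimed.
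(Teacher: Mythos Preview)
Your approach mirrors the paper's proof closely, but there are two points worth tightening.

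First, a small inaccuracy: in step 2(a)(i) you say Theorem \ref{thm3} produces a nonzero candidate ``with confidence $1-\epsilon$''. That is the wrong direction. Theorem \ref{thm3} is \emph{deterministic} when the divisor is of torsion: it always returns a nonzero multiple of the true order. The probabilistic clause only concerns the non-torsion case (false positives). So a zero return from \underline{\sf TorsionOrder} is a genuine proof of non-torsion, not a probabilistic one.

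Second, and more importantly, your treatment of step 6 for the first assertion is incomplete. Calling $I-\partial_x(A+Out)=0$ an ``unambiguous correctness check'' does not by itself explain why \emph{failure} of this test implies $I$ is non-elementary. The paper fills this gap: by construction each first component of $\mathrm{Ints}$ differs from the corresponding second component by a first-kind integral, so $\tilde I$ and $Out$ (hence $I$ and $A+Out$) differ by a first-kind integral. If that difference is nonzero it is a nonzero first-kind integral, which is never elementary; since $A+Out$ is elementary, $I$ cannot be. You should make this step explicit.

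Your final paragraph on the $D$ versus $ND$ bookkeeping is a fair observation: the algorithm as printed feeds $D$, not $ND$, to \underline{\sf JacobianReduce}, and the paper's own proof glosses over this (``The same for step $2(a)ii$''). Your instinct to pass $ND$ and rescale by $1/N$ in $\mathrm{Ints}$ is the intended reading, and flagging it is appropriate.
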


\begin{proof}
In step $1$, if \underline{\sf HermiteReduction}($I$) fails, then $I$ is not elementary. If $\tilde{I}$ is elementary, then all the $T_{i,j}$ are of torsion with proposition \ref{proptor}. In step $2(a)i$ if \underline{\sf TorsionOrder} returns $0$, then $T_{i,j}$ is not of torsion, thus $\tilde{I}$ is not elementary. The same for step $2(a)ii$. In step $3$, the integrals of first elements and the second elements differ by an integral of first kind. Thus $\tilde{I}$ and $Out$ differ by an integral of first kind. Thus $I$ and $A+Out$ differ by an integral of first kind. If $I-\partial_x (A+Out)\neq 0$, then this is a non zero integral of first kind and thus not elementary.
Step $6$ is the only case returning an expression, it is elementary by construction and the result is checked in step $6$. Finally the case ``Not handled'' can only occur if $\tilde{I}$ is not a linear combination of the $T_{i,j}$ and an integral of first kind, and by Theorem \ref{thm1} this does not occur when $R$ is generic.

For complexity, the first loop is executed at most $d$ times, and note $e_i$ the degree of extension for computing $T_i$, which costs $O(d^2e_i^2)$. The dominant cost of steps $2(a)$ is the torsion order calculation, which cost $\tilde{O}( (kd)^{\omega+g} h^{g+1} e_i^g)$. This test is done $e_i$ times, which give total cost of
$$\sum_{i=1}^{\sharp L}  \tilde{O}( (kd)^{\omega+g} h^{g+1} e_i^{g+1}+d^2e_i^2)$$
As function of $e_i$, torsion cost is dominant in positive genus, and as $\sum_i e_i \leq kd$ by convexity the maximum is reached when $\sharp L=1$ and $e_1=kd$, which gives $\tilde{O}( (kd)^{\omega+2g+1} h^{g+1})$. The steps $4,5,6$ are linear algebra in dimension $\sum_i e_i^2$ which is maximized when $\sharp L=1$ and $e_1=kd$. Thus the cost is in $O((kd)^{2\omega})$, which is less than the torsion part for positive genus as $\omega<3$.
\end{proof}

\section{Conclusion}

We proved that generically, a similar decomposition as done by Trager for rational integrals can be done for superelliptic integrals, and thus unusable large field extensions can be avoided. However it is unproven that it is always possible. In particular, we would like to prove that the traces of the roots of a polynomial over its rupture field are enough to find all $\mathbb{K}$ linear relations among the roots. The known algorithms have still factorial in degree complexity \cite{10}, even if generically factorization in the rupture field is enough to find the relations. We then use fast multiplication techniques in Jacobians to test fast for principality and torsion of divisors. However, the principality test in characteristic $0$ is still slow in binary complexity as the coefficient size of reduced divisor grows very fast. Still we do this computation only when we are reasonably sure that the divisor is of torsion. Over $\mathbb{Q}$ for elliptic curves, the Nagell-Lutz Theorem gives a bound on the size of torsion points. If we had a similar bound for the size of torsion points in the Jacobian of superelliptic curves on number fields, we could then ensure that the principality test would also be fast in binary complexity. Also, our algorithm for finding torsion order relies to test principality of lots of multiples of $D$. As the cost is logarithmic, the total cost is $\tilde{O}(N)$ for computing all of them up to $N$. However, all $N$ are probably not possible, as for example for elliptic curves up to quartic fields we already have a complete (small) list of torsion orders.

\bibliographystyle{abbrv}

\begin{small}
\bibliography{super}
\end{small}

\end{document}